%% LyX 2.2.0 created this file.  For more info, see http://www.lyx.org/.
%% Do not edit unless you really know what you are doing.
\documentclass[oneside,english]{amsart}
\usepackage[pdfencoding=auto,psdextra]{hyperref}
\usepackage[T1]{fontenc}
\usepackage[latin9]{inputenc}
\usepackage{geometry}
\geometry{verbose,tmargin=3cm,bmargin=3cm,lmargin=3cm,rmargin=3cm}
\usepackage{amstext}
\usepackage{amsthm}
\usepackage{amssymb}
\usepackage{graphicx}
\usepackage[outdir=./]{epstopdf}

\hypersetup{colorlinks,
	citecolor=red,
	linkcolor=blue,}
\usepackage{color}
\epstopdfsetup{outdir=./}
\makeatletter
%%%%%%%%%%%%%%%%%%%%%%%%%%%%%% Textclass specific LaTeX commands.
\numberwithin{equation}{section}
\numberwithin{figure}{section}
\theoremstyle{plain}
\newtheorem{thm}{\protect\theoremname}
  \theoremstyle{plain}
  \newtheorem{cor}[thm]{\protect\corollaryname}
  \theoremstyle{definition}
  \newtheorem{defn}[thm]{\protect\definitionname}
  \theoremstyle{plain}
  \newtheorem{lem}[thm]{\protect\lemmaname}
  \theoremstyle{plain}

\makeatother

\usepackage{babel}
  \providecommand{\corollaryname}{Corollary}

  \providecommand{\definitionname}{Definition}
  \providecommand{\lemmaname}{Lemma}
  \providecommand{\propositionname}{Proposition}
\providecommand{\theoremname}{Theorem}
  \providecommand{\remarkname}{Remark}

\newcommand{\R}{\mathbb{R}}

\newcommand{\cbinom}[2]{\begin{Bmatrix} #1 \\ #2 \end{Bmatrix}}

\DeclareMathOperator{\vol}{vol}

\begin{document}

\title{A Continuous Analogue of Lattice Path Enumeration: Part II}

\author{Tanay Wakhare$^{\dag}$}
\address{$^{\dag}$~University of Maryland, College Park, MD 20742, USA}
\email{twakhare@gmail.com}
\author{Christophe Vignat$^{\ddag}$}
\address{$^\ddag$~Tulane University, New Orleans, LA 70118, USA}
\email{cvignat@tulane.edu} 
%\author{Quang-Nhat Le$^{\S}$}
%\address{$^\S$~Brown University, Providence, RI 02912, USA}
%\email{quang\_nhat\_le@brown.edu}
%\author{Sinai Robins$^{\ast}$}
%\address{$^\ast$~Brown University, Providence, RI 02912, USA}
%\email{sinai.robins@gmail.com}

\begin{abstract}
Following the work of Cano and D\'iaz, we study continuous binomial coefficients and Catalan numbers. We explore their analytic properties, including integral identities and generalizations of discrete convolutions. We also conduct an in-depth analysis of a continuous analogue of the binomial distribution, including a stochastic representation as a Goldstein-Kac process.
\end{abstract}

\maketitle

\section{Introduction}

In two recent papers \cite{Diaz1, Diaz2}, Leonardo Cano and Rafael D\'iaz introduced continuous analogues of the binomial coefficients and Catalan numbers. They did this by introducing a general procedure for studying continuous lattice paths, then measuring the volume of a moduli space associated to these continuous paths. By recognizing the binomial coefficients and Catalan numbers as counting certain types of lattice paths, their continuous analogues are defined as the volumes of associated moduli spaces.

In Part I of this work \cite{Wakhare}, we focused on the geometric definitions behind continuous lattice path enumeration. Our most telling result is that through a limiting procedure with Todd operators, we are 
%(in principle) 
able to turn results about continuous Catalan numbers into results about discrete Catalan numbers. Therefore, studying the continuous case will lead to new insight about the discrete case. In this current paper, we therefore ignore the geometric intuition underlying the continuous binomial coefficients and Catalan numbers and treat them as analytic objects of independent interest.

We already have the fundamental result:
\begin{thm}\cite[Thm. 14]{Diaz2}
\label{cb1}
For $0\leq s \leq x$, the continuous binomial coefficient $\cbinom{x}{s}$ satisfies
\begin{equation}
\cbinom{x}{s}=2 I_{0}\left(\sqrt{s(x-s)}\right)+\frac{x}{\sqrt{s(x-s)}}I_{1}\left(2\sqrt{s(x-s)}\right),\label{eq:Catalan explicit2}
\end{equation}
where $I_{\nu}(z)$ denotes the modified Bessel function of the first kind.
\end{thm}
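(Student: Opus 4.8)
The plan is to compute $\cbinom{x}{s}$ directly from its geometric definition as the total volume of the moduli space of continuous monotone lattice paths from $(0,0)$ to $(s,x-s)$, and to reduce that volume to a resummation of the defining power series of the modified Bessel functions. First I would stratify the moduli space by the combinatorial type of the path, that is, by the number $a$ of horizontal segments and the number $b$ of vertical segments. Since horizontal and vertical segments must alternate, only strata with $|a-b|\le 1$ are non-empty, and there are exactly four boundary types: a path may begin and end with a horizontal segment ($a=b+1$), begin and end with a vertical segment ($b=a+1$), or alternate starting from either direction ($a=b$, which contributes two distinct types, namely starting horizontal or starting vertical).

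Next I would compute the volume of a single stratum. Fixing a type $(a,b)$, the horizontal turning points are recorded by an ordered tuple $0\le u_1\le\cdots\le u_a\le s$ and the vertical ones by $0\le w_1\le\cdots\le w_b\le x-s$; each such ordered configuration sweeps out a simplex of volume $\frac{s^a}{a!}$, respectively $\frac{(x-s)^b}{b!}$, so under the moduli-space normalization the stratum contributes $\frac{s^a}{a!}\cdot\frac{(x-s)^b}{b!}$. Summing over all admissible strata with the correct multiplicities then yields
\[
\cbinom{x}{s} = 2\sum_{k\ge 0}\frac{\bigl(s(x-s)\bigr)^k}{(k!)^2} + \sum_{k\ge 0}\frac{s^{k+1}(x-s)^k + s^k(x-s)^{k+1}}{k!\,(k+1)!},
\]
where the factor $2$ on the diagonal records the two alternating types with $a=b$, and the two summands in the second series correspond to the types $a=b+1$ and $b=a+1$.

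Finally I would recognize each series in closed form. The diagonal sum is exactly $2 I_0\bigl(2\sqrt{s(x-s)}\bigr)$, while factoring $s$ and $x-s$ out of the off-diagonal sums and using $\sum_{k}\frac{z^{2k}}{k!(k+1)!} = I_1(2z)/z$ with $z=\sqrt{s(x-s)}$ converts the remaining terms into $\bigl(s+(x-s)\bigr)\,I_1\bigl(2\sqrt{s(x-s)}\bigr)/\sqrt{s(x-s)} = \frac{x}{\sqrt{s(x-s)}}\,I_1\bigl(2\sqrt{s(x-s)}\bigr)$, which reproduces the right-hand side of \eqref{eq:Catalan explicit2}.

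I expect the main obstacle to be the second step: justifying that each stratum carries weight precisely $\frac{s^a}{a!}\cdot\frac{(x-s)^b}{b!}$ under the moduli-space measure, and in particular pinning down the bookkeeping at the boundaries (the factor $2$ on the diagonal versus the single count for each unequal type). Once the simplex volumes and multiplicities are fixed correctly, the resummation against the Bessel series is routine. As an independent check, and an alternative route to the statement, one could instead verify that the proposed right-hand side satisfies the second-order (Bessel-type) functional equation governing $\cbinom{x}{s}$ together with its initial data; the symmetry $\cbinom{x}{s}=\cbinom{x}{x-s}$, which is manifest in both the series and the closed form, provides a further consistency test.
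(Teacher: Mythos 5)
The paper itself offers no proof of Theorem \ref{cb1}: it is imported verbatim from \cite{Diaz2}, so the comparison must be with the derivation there. Your proposal is essentially that original derivation --- stratify the moduli space by the combinatorial type of the alternating path, observe that only types with $|a-b|\le 1$ occur (two of them on the diagonal $a=b$, one each for $a=b+1$ and $b=a+1$), compute each stratum as a product of simplices, and resum against the defining series of $I_0$ and $I_1$. So the route is correct and matches the source; the resummation step, using $I_0(2z)=\sum_k z^{2k}/(k!)^2$ and $I_1(2z)/z=\sum_k z^{2k}/(k!\,(k+1)!)$ with $z=\sqrt{s(x-s)}$, is exactly right.

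The one genuine slip is the stratum weight, and it is the point you flagged yourself. A path with $a$ horizontal segments has lengths $h_1,\dots,h_a\ge 0$ constrained by $h_1+\cdots+h_a=s$, so its horizontal moduli is an $(a-1)$-simplex: there are only $a-1$ free breakpoints $0\le u_1\le\cdots\le u_{a-1}\le s$, the final one being pinned by the endpoint. The weight of a type with $a$ horizontal and $b$ vertical segments is therefore $\frac{s^{a-1}}{(a-1)!}\cdot\frac{(x-s)^{b-1}}{(b-1)!}$, not $\frac{s^{a}}{a!}\cdot\frac{(x-s)^{b}}{b!}$; the degenerate single-direction types ($a=0$ or $b=0$) contribute nothing for $0<s<x$. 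Your displayed series is nonetheless the correct one: summing the corrected weights over $a=b=j\ge 1$ and shifting $k=j-1$ gives $2\sum_{k\ge0}(s(x-s))^k/(k!)^2$, and the types $a=b\pm1$ give the two off-diagonal series, so your formula stands with $k$ counting free breakpoints rather than segments. Taken literally, your weights would make the $k=0$ diagonal term an ``empty path'' counted twice, which is meaningless; after the shift it is the two one-corner paths, each a single point of moduli. Two further remarks. First, your computation confirms a typo in the statement as printed: the first term must be $2I_0\bigl(2\sqrt{s(x-s)}\bigr)$, in agreement with the specialization \eqref{Laplace_CB} used later in the paper. Second, your proposed independent check is viable with the paper's own ingredients: in the variables $u=s$, $\bar{s}=x-s$ the right-hand side satisfies $\frac{\partial}{\partial u}\frac{\partial}{\partial\bar{s}}\cbinom{u+\bar{s}}{u}=\cbinom{u+\bar{s}}{u}$ together with the boundary value $\cbinom{x}{0}=x+2$, both of which appear explicitly in Section \ref{section3}, and the symmetry $s\leftrightarrow x-s$ is manifest on both sides.
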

We prove the following closed form expression for continuous Catalan numbers in Section \ref{section2}.
\begin{thm}
\label{catalan1} 
The continuous Catalan numbers defined in \cite{Diaz2} are equal to
\begin{equation}
C\left(x,y\right)=I_{0}\left(\sqrt{x^{2}-y^{2}}\right)-\frac{x-y}{x+y}I_{2}\left(\sqrt{x^{2}-y^{2}}\right),\label{eq:Catalan explicit2}
\end{equation}
\end{thm}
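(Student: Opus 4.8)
The plan is to express the continuous Catalan number in terms of the continuous binomial coefficient and then apply Theorem \ref{cb1}. Starting from the definition of $C(x,y)$ in \cite{Diaz2}, I would first carry out the continuous analogue of the reflection principle, which should rewrite $C(x,y)$ as a fixed linear combination of continuous binomial coefficients $\cbinom{x}{s}$. The natural specialization is $s=\tfrac{x-y}{2}$, since then $s(x-s)=\tfrac14\left(x^2-y^2\right)$, so that the Bessel arguments coming from Theorem \ref{cb1} collapse to the single quantity $\sqrt{x^2-y^2}$ appearing in the claim. This reduction is also the step that pins down the rational prefactor $\tfrac{x-y}{x+y}$, and I expect it to be the main obstacle: one must reproduce the continuous counterpart of the discrete shift $\binom{2n}{n}\mapsto\binom{2n}{n+1}$ \emph{exactly}, not merely to leading order, since the factor $\tfrac{x-y}{x+y}$ is precisely the continuous ballot ratio and any error there is invisible to a naive asymptotic argument.

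Once $C(x,y)$ is written through Theorem \ref{cb1} as a combination of $I_0\!\left(\sqrt{x^2-y^2}\right)$, $I_1\!\left(\sqrt{x^2-y^2}\right)$ and $I_2\!\left(\sqrt{x^2-y^2}\right)$ with coefficients rational in $x$ and $y$, the remainder is purely algebraic. I would collapse the combination using the standard recurrences $I_{\nu-1}(z)-I_{\nu+1}(z)=\tfrac{2\nu}{z}\,I_\nu(z)$ and $I_{\nu-1}(z)+I_{\nu+1}(z)=2\,I_\nu'(z)$; in particular the case $I_0(z)-I_2(z)=\tfrac{2}{z}\,I_1(z)$ lets me eliminate the $I_1$ term in favor of $I_0$ and $I_2$, after which the coefficients should consolidate into exactly $1$ and $-\tfrac{x-y}{x+y}$. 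The bookkeeping of these rational coefficients against the Bessel identities is delicate but routine.

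As an independent check, or as an alternative that sidesteps the reflection step entirely, I would verify the identity at the level of power series: writing $z^2=x^2-y^2=(x-y)(x+y)$ and expanding the right-hand side of the claim via $I_\nu(z)=\sum_{k\ge 0}\frac{(z/2)^{2k+\nu}}{k!\,(k+\nu)!}$ produces a double series in the characteristic variables $x-y$ and $x+y$, whose coefficients I would match term by term against the series obtained directly from the definition of $C(x,y)$. A third route, consonant with the Goldstein-Kac theme of the paper, is to observe that $I_\nu\!\left(\sqrt{x^2-y^2}\right)$ are fundamental solutions of a telegraph-type equation in the coordinates $x\pm y$, show that both sides of the asserted identity solve the same equation with matching data, and conclude by uniqueness. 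In all three approaches the only analytic subtlety is justifying the interchange of summation and the convergence of the relevant Bessel series on the range $0\le y\le x$.
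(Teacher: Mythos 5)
There is a genuine gap: every one of your three routes defers the actual content of the theorem, which is the closed-form evaluation of the polytope volumes entering the definition $C(x,y)=\sum_{n\ge0}\vol\left(\Lambda^n(x,y)\right)$. Your first route hinges on a ``continuous reflection principle'' expressing $C(x,y)$ as a fixed linear combination of continuous binomial coefficients, which you yourself flag as the main obstacle and never establish; it is doubtful it exists in the naive form you want. Indeed, with $z=\sqrt{x^2-y^2}$ the claimed formula is equivalent (via $I_2(z)=I_0(z)-\tfrac{2}{z}I_1(z)$) to
\begin{equation*}
C(x,y)=\frac{2y}{x+y}\,I_0(z)+\frac{2(x-y)}{(x+y)\,z}\,I_1(z),
\end{equation*}
whereas Theorem \ref{cb1} gives $\cbinom{x}{(x-y)/2}=2I_0(z)+\tfrac{2x}{z}I_1(z)$; the $y$-dependent rational coefficients cannot be produced by finitely many evaluations of $\cbinom{x}{s}$ at shifted $s$, because the discrete shift $\binom{2n}{n}\mapsto\binom{2n}{n+1}$ has no canonical continuous counterpart (a unit shift in $s$ changes the Bessel argument, not just prefactors). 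Your third route has the same defect in different clothing: to invoke PDE uniqueness you must first derive, from the volume definition, that $C(x,y)$ satisfies a specific telegraph-type equation with specific boundary data, and you do not indicate how.

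Your second route is the viable one and is essentially the paper's proof, but you omit its only nontrivial ingredient. The paper proves by induction, using the integral recurrence $\vol\left(\Lambda^{n+1}(x,y)\right)=\int_0^{(x-y)/2}\int_0^{(x+y)/2-b}\vol\left(\Lambda^n(a+2b,a)\right)da\,db$ from \cite{Diaz2}, the closed form
\begin{equation*}
\vol\left(\Lambda^{n}\left(x,y\right)\right)=\frac{\left(x-y\right)^{n}\left(x+y\right)^{n-1}\left(x+\left(2n+1\right)y\right)}{2^{2n}\,n!\left(n+1\right)!}.
\end{equation*}
Once this lemma is in hand, the series match you propose is immediate and correct: writing $x+(2n+1)y=(n+1)(x+y)-n(x-y)$ splits the $n$-th volume as $\frac{\left[(x-y)(x+y)\right]^n}{4^n (n!)^2}-\frac{(x-y)^{n+1}(x+y)^{n-1}}{4^n (n-1)!\,(n+1)!}$, and summing the first piece gives the $I_0\left(\sqrt{x^2-y^2}\right)$ series while reindexing the second ($n=k+1$) gives exactly the series of $\frac{x-y}{x+y}I_2\left(\sqrt{x^2-y^2}\right)$. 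Convergence is not the subtlety here (all series involved are entire with positive terms); the missing step is the volume computation itself, without which ``the series obtained directly from the definition'' does not yet exist.
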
	
We can regard these expressions as \textit{definitions} for both objects, and indeed they lead to easy analytic continuations. The vast literature surrounding Bessel functions then means that we can prove several deep results about these two quantities, which should translate into new intuition about the discrete cases. We prove analogues of several discrete identities, such as the Chu-Vandermonde identity or Catalan convolution, and collect some integral transforms associated with both objects.

Moreover, we can naturally define, as in \cite{Diaz2}, the \textbf{continuous binomial distribution} $CB(x,p)$ associated to continuous binomials. It has parameters $x \geq 0$ and $0 \leq p \leq 1$, and density function
\begin{equation} 
		f_{x,p}(s) :=  \frac{1}{A_{x,p}} \cbinom{x}{s} p^{s}\left(1-p\right)^{x-s}, \label{eq:binomial}
\end{equation}
with $s\in[0,x]$. The normalization constant $A_{x,p}$ is such that
\[ \int_0^x f_{x,p}(s)\thinspace ds = 1, \]
and its value is given in Thm. \ref{normalization}.
	
	Sections \ref{section3} and \ref{section4} lead to several convolution identities and integral transforms for the continuous binomial coefficient, along with closed form expression for $A_{x,p}$ and the moment generating function for $f_{x,p}(s)$. Finally, in Section \ref{section5} we are able to prove a probabilistic interpretation of the continuous binomial coefficient due to its close connection to the Goldstein-Kac telegraph process.

%Eigenfunction of a hyperbolic second order PDE with eigenval $1$.
%\bigskip \noindent
%In the case that $d=2$ and $(x,y) = (x_1,x_2)$,  identity \eqref{eq:PDIdentity} becomes
%	\[ 
%	\left(1+\pd{x}\right) \left(1+\pd{y}\right) \cbinom{x+y}{x} = \left(1+\pd{x} + 1 + \pd{y}\right) \cbinom{x+y}{x}, 
%	\]
%which simplifies to the Cano and D\'iaz result 
%\[ 
%\pd{x}\pd{y} \cbinom{x+y}{x} = \cbinom{x+y}{x}. 
%\]

%%%%%%%%%%%%%%%%%%%%%%%%%
%CATALAN
%%%%%%%%%%%%%%%%%%%%%%%%%%%%

\section{Continuous Catalan Numbers}\label{section2}
\subsection{Closed form}
Recall that the discrete Catalan numbers $C_{n}$ count the number of lattice paths joining the points $(0,0)$  and $(n,0)$ that stay above the $x$-axis.  Therefore, the continuous Catalan numbers must satisfy similar restrictions - they correspond to continuous analogues of lattice paths that stay above the $x$ axis.  

%It is our goal to then prove Theorem \ref{catalan1}: 
%\begin{equation}
%C\left(x,y\right)=I_{0}\left(\sqrt{x^{2}-y^{2}}\right)-\frac{x-y}{x+y}I_{2}\left(\sqrt{x^{2}-y^{2}}\right).\label{eq:Catalan explicit2}
%\end{equation}

%The continuous Catalan numbers were defined as follows:
Let us first define the following polytope, which contains all possible paths in the plane made out of $n$ steps of arbitrary lengths in the East or North directions that connect the origin to the point $\left( \frac{x+y}{2},\frac{x-y}{2} \right)$  and remain under the line $y=x$.

%\begin{prop}\cite[Prop. 24]{Diaz2}
\begin{defn}
For $n \geq 1$, the convex polytope $\Lambda^n(x,y)$ is defined as the set of all $(x_1, \dots, x_n, y_1, \dots, y_n) \in \R^{2n}$ that satisfy the following inequalities:
\begin{align*}
0 \leq x_1 \leq \cdots \leq x_n   &    \leq \frac{x+y}{2} \\
0 \leq y_1 \leq \cdots \leq y_n   &    \leq \frac{x-y}{2}  \\
 y_i  &  \leq x_i.
\end{align*}
\end{defn}
%\end{prop}

This polytope allows to define the continuous Catalan numbers as follows.
\begin{defn}\cite[Defn. 23]{Diaz2} The continuous Catalan numbers are defined by
\begin{equation}
C\left(x,y\right) :=\sum_{n=0}^{\infty}\vol\left(\Lambda^{n}\left(x,y\right)\right),\thinspace\thinspace0\le y\le x,            \label{eq:Catalan}
\end{equation}
where the volume is computed with respect to the Lebesgue measure.
\end{defn}
The volume of each of these polytopes can then be explicitly computed as follows.
%where the volume is defined using a flat metric that endows the moduli space.

\begin{lem}
For $n\ge0,$ the volume of $\Lambda^{n}\left(x,y\right)$
is equal to
\begin{equation}
\vol\left(\Lambda^{n}\left(x,y\right)\right)=\frac{\left(x-y\right)^{n}\left(x+y\right)^{n-1}\left(x+\left(2n+1\right)y\right)}{2^{2n}n!\left(n+1\right)!}.\label{eq:volume}
\end{equation}
\end{lem}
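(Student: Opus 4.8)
The plan is to compute the volume as the difference of two elementary pieces via a continuous reflection (ballot) argument. Set $a=\tfrac{x+y}{2}$ and $b=\tfrac{x-y}{2}$, so that $0\le b\le a$, and let $P$ denote the product of order simplices obtained by dropping the coupling conditions $y_i\le x_i$, namely $P=\{0\le x_1\le\cdots\le x_n\le a\}\times\{0\le y_1\le\cdots\le y_n\le b\}$. Since $\Lambda^n(x,y)$ is precisely the subset of $P$ on which $y_i\le x_i$ for every $i$, we have $\vol(\Lambda^n)=\vol(P)-\vol(B)$, where $B\subset P$ is the ``bad'' set on which $y_j>x_j$ for at least one index $j$. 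The first factor is immediate: $\vol(P)=\frac{a^n}{n!}\cdot\frac{b^n}{n!}$.

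The heart of the argument is to show that $\vol(B)=\frac{a^{n-1}}{(n-1)!}\cdot\frac{b^{n+1}}{(n+1)!}$. I would decompose $B$ according to the first crossing index $\tau=\min\{i:y_i>x_i\}$. On the block of indices below $\tau$ the chain stays weakly below the diagonal, while the indices above $\tau$ are unconstrained apart from monotonicity and the lower bounds $x_\tau,y_\tau$, so they contribute the elementary factors $\frac{(a-x_\tau)^{n-\tau}}{(n-\tau)!}$ and $\frac{(b-y_\tau)^{n-\tau}}{(n-\tau)!}$. A continuous André-type reflection across the line $y=x$ from the first crossing onward, sending $(x_i,y_i)\mapsto(y_i,x_i)$ for $i\ge\tau$, then identifies $B$ with a product of two order simplices in which one coordinate has effectively been transferred from the $a$-bounded chain to the $b$-bounded chain; summing the contributions over $\tau$ collapses to the single product $\frac{a^{n-1}}{(n-1)!}\cdot\frac{b^{n+1}}{(n+1)!}$.

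Assembling the two pieces gives $\vol(\Lambda^n)=\frac{a^nb^n}{(n!)^2}-\frac{a^{n-1}b^{n+1}}{(n-1)!(n+1)!}=\frac{a^{n-1}b^n\bigl((n+1)a-nb\bigr)}{n!(n+1)!}$, after factoring out $\frac{a^{n-1}b^n}{n!(n+1)!}$. Substituting $a=\tfrac{x+y}{2}$ and $b=\tfrac{x-y}{2}$ is then routine: the linear factor becomes $(n+1)a-nb=\tfrac12\bigl(x+(2n+1)y\bigr)$, the monomial $a^{n-1}b^n$ contributes $(x+y)^{n-1}(x-y)^n/2^{2n-1}$, and the powers of $2$ combine to $2^{2n}$, yielding exactly \eqref{eq:volume}. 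The boundary case $n=0$ is handled separately, where $P$ is a point, $B$ is empty, and both sides equal $1$.

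The main obstacle is making the continuous reflection fully rigorous in the presence of the asymmetric ceilings $a\ne b$: one must verify that the reflection is measure preserving, that the first-crossing index $\tau$ is recoverable from the image so that the map is a genuine bijection, and that the transferred coordinate lands with the correct ($b$-)ceiling rather than the $a$-ceiling. An alternative that avoids constructing the bijection is to evaluate $\vol(B)=\sum_{\tau=1}^n\vol(B_\tau)$ directly, integrating out $x_1,\dots,x_{\tau-1},y_1,\dots,y_{\tau-1}$ against the diagonal constraints and then $x_\tau,y_\tau$ in closed form; in that route the only remaining work is a short induction to evaluate the resulting sum over $\tau$, at the cost of a more computational derivation.
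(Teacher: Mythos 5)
Your strategy is genuinely different from the paper's proof, which simply verifies by induction that the right-hand side of \eqref{eq:volume} satisfies the integral recurrence $\vol\left(\Lambda^{n+1}(x,y)\right)=\int_{0}^{\frac{x-y}{2}}\int_{0}^{\frac{x+y}{2}-b}\vol\left(\Lambda^{n}(a+2b,a)\right)da\thinspace db$ with $\vol(\Lambda^{0})=1$, and your bookkeeping is correct as far as it goes: with $a=\frac{x+y}{2}$, $b=\frac{x-y}{2}$, the identity $\vol(B)=\frac{a^{n-1}b^{n+1}}{(n-1)!\left(n+1\right)!}$ is exactly equivalent to \eqref{eq:volume}, and your final algebra and the $n=0$ case are fine. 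The genuine gap is the central claim itself, and the map you actually define cannot prove it. The index-wise swap $(x_i,y_i)\mapsto(y_i,x_i)$ for $i\ge\tau$ sends a bad configuration to a pair of chains that still have lengths $(n,n)$ (now with ceilings $b$ and $a$), so no coordinate is ever transferred between the chains and the image cannot be a product of order simplices of lengths $n-1$ and $n+1$. Moreover $\tau$ is not recoverable from the image of this map: for $i<\tau$ one only knows $y_i\le x_i$, so the reflected pair may already satisfy $x'_i>y'_i$ strictly at indices before $\tau$, and the images of the strata $B_\tau$ overlap. The reflection that does work operates on the associated staircase path, not index-wise: the first crossing occurs in the interior of the $\tau$-th north segment, at the diagonal point $(x_\tau,x_\tau)$, and one must cut that segment at height $x_\tau$ and reflect the path prefix across $y=x$; it is this cut-and-reassemble step that destroys one east segment and creates one north segment, producing lengths $(n-1,n+1)$, and it is precisely the construction your sketch omits (along with the verification, which you correctly flag, that it is a measure-preserving a.e.\ bijection onto the full product of simplices).

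Your computational fallback is also not the ``short induction'' you suggest. Stratifying by the first crossing $\tau$, the tail factors $\frac{(a-x_\tau)^{n-\tau}}{(n-\tau)!}\frac{(b-y_\tau)^{n-\tau}}{(n-\tau)!}$ are right, but the head (indices $i<\tau$) is itself a diagonal-constrained polytope: since $x_{\tau-1}\le x_\tau<y_\tau$, its volume is the Catalan-type quantity $\frac{x_\tau^{2\tau-2}}{(\tau-1)!\thinspace\tau!}$, so one is left to prove
\begin{equation*}
\sum_{\tau=1}^{n}\frac{1}{(\tau-1)!\thinspace\tau!\thinspace(n-\tau)!\thinspace(n-\tau+1)!}\int_{0}^{b}x^{2\tau-2}\left(a-x\right)^{n-\tau}\left(b-x\right)^{n-\tau+1}dx=\frac{a^{n-1}b^{n+1}}{(n-1)!\left(n+1\right)!},
\end{equation*}
a nontrivial identity that your proposal asserts (``collapses'') rather than proves. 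In short: the inclusion--exclusion framing, the value of $\vol(P)$, and the reduction of the lemma to the bad-set volume are all correct and would give a more conceptual proof than the paper's guess-and-verify induction, but the proof of $\vol(B)=\frac{a^{n-1}b^{n+1}}{(n-1)!\left(n+1\right)!}$ --- the only step with real content --- is missing on both of the routes you offer.
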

\begin{proof}
The proof of this lemma is elementary, and follows by induction on $n$. Namely, it is easily checked that the right-hand side satisfies the integral recurrrence \cite[Prop. 27]{Diaz2})
\[
\vol\left(\Lambda^{n+1}\left(x,y\right)\right)=\int_{0}^{\frac{x-y}{2}}\int_{0}^{\frac{x+y}{2}-b}\vol\left(\Lambda^{n}\left(a+2b,a\right)\right)\thinspace da\thinspace db,
\]
together with the initial condition $\vol\left(\Lambda^{0}\left(x,y\right)\right)=1.$
\end{proof}
The proof of Theorem \ref{catalan1} is now completed
by computing the sum in (\ref{eq:Catalan}) using the expression (\ref{eq:volume}).

Moreover, by summing \eqref{eq:volume} over all $n$, it is shown in \cite[Prop. 29]{Diaz2} that the continuous Catalan numbers obey the recursion
\begin{equation}
C(x,y)= 1+\int_{0}^{\frac{x-y}{2}}\int_{0}^{\frac{x+y}{2}-b}C(a+2b,a)\thinspace da\thinspace db.
\end{equation}
As a check, we can manually verify that the closed form \eqref{eq:Catalan} for the continuous Catalan numbers obeys the same recursion.

\subsection{Parallels between the continuous and discrete case}
The special case $y=0$ gives the continuous Catalan function as defined
in \cite{Diaz2}:
\[
C\left(2x,0\right)=\frac{I_{1}\left(2x\right)}{x}.
\]
With $C_{n}$ denoting the usual Catalan numbers, we observe that
\[
\frac{I_{1}\left(2x\right)}{x}=\sum_{n\ge0}\frac{x^{2n}}{n!\left(n+1\right)!}=\sum_{n\ge0}\frac{x^{2n}}{2n!}C_{n}.
\]
Therefore, the continuous Catalan function $C\left(2x,0\right)$ is related
to the generating function of Catalan numbers by
\begin{equation}
\label{laplaceC}
\int_{0}^{+\infty}C\left(2\sqrt{x}u,0\right)e^{-u}du=\sum_{n}C_{n}x^{n}=\frac{2}{1+\sqrt{1-4x^{2}}}.
\end{equation}

The fact that a discrete generating function of the Catalan numbers
is related to a continuous integral transform of the continuous Catalan
function should not come as a surprise. However, the fact that the continuous Catalan numbers have a simple closed form expression in terms of Bessel functions lends hope to discovering closed form expressions for the continuous analogues of other objects that count lattice paths, such as the Delannoy numbers. 

A further parallel between the classical and continuous cases is provided by considering the convolution identity
\begin{equation}\label{catalanconvdisc}
\sum_{k=0}^{n-1}C_{k}C_{n-k}=\Delta C_{n}
\end{equation}
with $\Delta C_{n}=C_{n+1}-C_{n},$
a consequence of the fact that the generating function
\[
c\left(z\right)=\sum_{n\ge0}C_{n}z^{n}=\frac{2}{1+\sqrt{1-4z}}
\]
satisfies the equation
\[
c^{2}\left(z\right)=\frac{c\left(z\right)-1}{z}.
\]
For the continuous Catalan numbers, we have the following result, which is clearly a continuous analogue of the discrete identity \eqref{catalanconvdisc}.

\begin{thm}
The continuous Catalan numbers $C\left(x\right)=C\left(x,0\right)$ satisfy the convolution identity
\begin{equation}
\int_{0}^{z}C\left(x\right)C\left(z-x\right)dx=4\frac{d}{dz}C\left(z\right).\label{eq:continuous-convolution}
\end{equation}
\end{thm}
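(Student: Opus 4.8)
The plan is to pass to the Laplace transform, where the convolution on the left becomes a product and the derivative on the right becomes multiplication by $p$; this reduces the entire statement to an algebraic identity that directly mirrors the discrete functional equation $c^{2}(z)=(c(z)-1)/z$ already recorded above.

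First I would pin down the one--variable function. Setting $y=0$ in Theorem \ref{catalan1} and using the Bessel recurrence $I_{0}(x)-I_{2}(x)=\frac{2}{x}I_{1}(x)$ gives $C(x)=C(x,0)=\frac{2I_{1}(x)}{x}$, with power series $C(x)=\sum_{n\ge0}\frac{x^{2n}}{4^{n}n!(n+1)!}$ and in particular $C(0)=1$. Writing $F(p)=\int_{0}^{\infty}C(x)e^{-px}\,dx$ and integrating the series term by term (valid for $p>1$, using $\int_{0}^{\infty}x^{2n}e^{-px}\,dx=(2n)!/p^{2n+1}$ and the identity $(2n)!/(n!(n+1)!)=C_{n}$), I obtain
\[
F(p)=\frac{1}{p}\sum_{n\ge0}C_{n}\Big(\frac{1}{4p^{2}}\Big)^{n}=\frac{1}{p}\,c\!\Big(\frac{1}{4p^{2}}\Big),
\]
where $c(z)=\sum_{n}C_{n}z^{n}=2/(1+\sqrt{1-4z})$ is the ordinary Catalan generating function; equivalently $F(p)=2\big(p-\sqrt{p^{2}-1}\big)$.

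Next I would feed the discrete functional equation $c^{2}(z)=(c(z)-1)/z$ into this formula. With $z=1/(4p^{2})$ it gives $F(p)^{2}=p^{-2}c^{2}(z)=p^{-2}\cdot 4p^{2}\big(c(z)-1\big)=4pF(p)-4$. Now I interpret both sides as Laplace transforms: the convolution theorem sends $\int_{0}^{z}C(x)C(z-x)\,dx\mapsto F(p)^{2}$, while $\mathcal{L}[C'](p)=pF(p)-C(0)=pF(p)-1$ sends $4C'(z)\mapsto 4pF(p)-4$. Since the two transforms agree on the half-line $p>1$, uniqueness of the Laplace transform (both functions being continuous and of exponential type) yields \eqref{eq:continuous-convolution}.

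The main obstacle is essentially bookkeeping: justifying the term-by-term Laplace transform in the correct half-plane and confirming $C(0)=1$, so that the boundary term in $\mathcal{L}[C']$ produces exactly the constant $-4$. As an independent check---and a more transparent route to that ``$-4$''---one can equate powers of $z$ directly. With $a_{n}=1/(4^{n}n!(n+1)!)$, the coefficient of $z^{2N-1}$ on the left is $\frac{1}{4^{N-1}(2N-1)!}\sum_{m+n=N-1}C_{m}C_{n}$ (after evaluating the Beta integrals $\int_{0}^{z}x^{2m}(z-x)^{2n}\,dx$), which by the classical Catalan recurrence $\sum_{m+n=N-1}C_{m}C_{n}=C_{N}$ equals $\frac{2N}{4^{N-1}N!(N+1)!}=8N a_{N}$, exactly the coefficient of $z^{2N-1}$ in $4C'(z)$. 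This makes explicit that the continuous convolution identity is, coefficientwise, the discrete Catalan convolution \eqref{catalanconvdisc}.
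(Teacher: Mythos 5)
Your proposal is correct and takes essentially the same route as the paper: both pass to the Laplace transform, where the convolution identity reduces to the algebraic fact $\mathcal{L}_{C}^{2}\left(p\right)=4\,\mathcal{L}_{C'}\left(p\right)$ with $\mathcal{L}_{C}\left(p\right)=2\left(p-\sqrt{p^{2}-1}\right)$, the only cosmetic differences being that you compute $\mathcal{L}_{C'}$ as $pF\left(p\right)-C\left(0\right)$ and derive the algebra from the discrete functional equation $c^{2}\left(z\right)=\left(c\left(z\right)-1\right)/z$, whereas the paper transforms $C'\left(x\right)=2I_{2}\left(x\right)/x$ directly and verifies the identity by hand. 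Your closing coefficientwise verification via the Catalan recurrence is a correct extra touch not in the paper, and it even sidesteps the Laplace-uniqueness bookkeeping entirely.
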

\begin{proof}
Since
\[
C\left(x\right)=C\left(x,0\right)=2\frac{I_{1}\left(x\right)}{x},
\]
the continuous equivalent of this generating function is the Laplace
transform
\[
\mathcal{L}_{C}\left(s\right)=\int_{0}^{+\infty}C\left(x\right)e^{-xs}dx
=\frac{2}{s+\sqrt{s^{2}-1}}.
%=2\left(s-\sqrt{s^{2}-1}\right).
\]
Since the derivative of the continuous Catalan number
\[
C'\left(x\right) = \frac{d}{dx}C\left(x\right) = 2\frac{I_2\left(x\right)}{x}
\]
has Laplace transform
\[
\mathcal{L}_{C'}\left(s\right) = -1+2s^2-2s\sqrt{s^2-1},
\]
we deduce the identity
\[
\mathcal{L}_{C}^{2}\left(s\right)=4\mathcal{L}_{C'}\left(s\right).
\]
Taking the
inverse Laplace transform of this identity gives the desired identity.
\end{proof}

%%%%%%%%%%%%%%%%%
\subsection{Integral representations}We calculate some useful integral representations for the continuous Catalan numbers, which enable the easy application of Laplace-transformation type proofs. These also allow us to view the continuous Catalan numbers as probability distribution functions, and recover various moment expressions for the discrete Catalan numbers.

\begin{thm}
We have the integral representations
\[
C\left(x,y\right)=\frac{1}{\pi}\int_{0}^{\pi}e^{x\cos t}\left[\cos\left(y\sin t\right)-\left(\frac{x-y}{x+y}\right)^{2}\cos\left(y\sin t-2t\right)\right]dt
\]
and
\begin{equation*}
C\left(2x,0\right)=\frac{2}{\pi}\int_{0}^{\pi}e^{2x\cos t}\sin^{2}tdt=\frac{I_{1}\left(2x\right)}{x}.\label{eq:C integral representation}
\end{equation*}

\end{thm}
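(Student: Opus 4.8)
The plan is to recognize each Bessel term in the closed form of Theorem~\ref{catalan1} as a Fourier coefficient of the generating kernel obtained by complexifying the exponent. The key algebraic fact is that $x\cos t+iy\sin t=\tfrac{x+y}{2}e^{it}+\tfrac{x-y}{2}e^{-it}$, so that, setting $a=\tfrac{x+y}{2}$ and $b=\tfrac{x-y}{2}$ (whence $2\sqrt{ab}=\sqrt{x^{2}-y^{2}}$), one has $e^{x\cos t}e^{iy\sin t}=\exp(ae^{it}+be^{-it})$. I would therefore study the averages $c_{m}:=\tfrac{1}{2\pi}\int_{0}^{2\pi}\exp(ae^{it}+be^{-it})\,e^{-imt}\,dt$ and read off the cases $m=0$ and $m=2$.

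To evaluate $c_{m}$, expand both exponential factors as absolutely convergent power series and integrate term by term; only the resonant indices $p-q=m$ survive. For $m=0$ this yields $\sum_{k\ge0}(ab)^{k}/(k!)^{2}=I_{0}(2\sqrt{ab})=I_{0}(\sqrt{x^{2}-y^{2}})$, while for $m=2$ a shift of the summation index gives $\tfrac{a}{b}\sum_{k\ge0}(ab)^{k+1}/(k!\,(k+2)!)=\tfrac{x+y}{x-y}\,I_{2}(\sqrt{x^{2}-y^{2}})$. Taking real parts identifies $c_{m}$ with $\tfrac{1}{2\pi}\int_{0}^{2\pi}e^{x\cos t}\cos(y\sin t-mt)\,dt$, and the substitution $t\mapsto 2\pi-t$ shows this integrand is symmetric about $t=\pi$, so each average equals $\tfrac{1}{\pi}\int_{0}^{\pi}e^{x\cos t}\cos(y\sin t-mt)\,dt$. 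Substituting the resulting expressions $I_{0}(\sqrt{x^{2}-y^{2}})=\tfrac{1}{\pi}\int_{0}^{\pi}e^{x\cos t}\cos(y\sin t)\,dt$ and $I_{2}(\sqrt{x^{2}-y^{2}})=\tfrac{x-y}{x+y}\cdot\tfrac{1}{\pi}\int_{0}^{\pi}e^{x\cos t}\cos(y\sin t-2t)\,dt$ into $C(x,y)=I_{0}(\sqrt{x^{2}-y^{2}})-\tfrac{x-y}{x+y}I_{2}(\sqrt{x^{2}-y^{2}})$ from Theorem~\ref{catalan1} produces the claimed representation, the coefficient $\bigl(\tfrac{x-y}{x+y}\bigr)^{2}$ arising as the product of the prefactor $\tfrac{x-y}{x+y}$ with the second factor $\tfrac{x-y}{x+y}$ absorbed from $c_{2}$.

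For the special case, setting $y=0$ collapses the prefactor $\bigl(\tfrac{x-y}{x+y}\bigr)^{2}$ to $1$ and the bracket to $1-\cos 2t=2\sin^{2}t$, giving $C(x,0)=\tfrac{2}{\pi}\int_{0}^{\pi}e^{x\cos t}\sin^{2}t\,dt$; replacing $x$ by $2x$ and invoking the recurrence $I_{0}(z)-I_{2}(z)=\tfrac{2}{z}I_{1}(z)$ with $z=2x$ yields $C(2x,0)=I_{1}(2x)/x$. I expect the main obstacle to be coefficient bookkeeping rather than any analytic difficulty: the asymmetry between $a$ and $b$ in the resonant sums is exactly what produces the factor $\tfrac{x+y}{x-y}$ in $c_{2}$ (equivalently the square appearing in the final integrand), and one must select the phase $e^{-2it}$, i.e. $m=2$ and not $m=-2$, so that the even symmetry of the real part reduces the integral to $[0,\pi]$ with cosine argument $y\sin t-2t$ rather than $y\sin t+2t$.
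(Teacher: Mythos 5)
Your proof is correct, and its endgame is exactly the paper's: substitute integral representations of $I_0\left(\sqrt{x^2-y^2}\right)$ and $I_2\left(\sqrt{x^2-y^2}\right)$ into the closed form of Theorem \ref{catalan1}, with the square $\left(\frac{x-y}{x+y}\right)^2$ arising precisely as you describe. The genuine difference is upstream, in how those representations are obtained. The paper quotes the generalized Schl\"afli formula of Magnus--Oberhettinger--Soni for $J_\nu$, rotates to $I_\nu$ via $I_\nu(z)=e^{-\imath\nu\pi/2}J_\nu(\imath z)$, and specializes to $\nu=0,2$, the half-line term dropping out because $\sin(\pi\nu)=0$ at integers. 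You instead prove the integer-order cases from scratch: writing $x\cos t+\imath y\sin t=ae^{\imath t}+be^{-\imath t}$ with $a=\frac{x+y}{2}$, $b=\frac{x-y}{2}$, you extract Fourier coefficients of $\exp\left(ae^{\imath t}+be^{-\imath t}\right)$ by the resonance condition $p-q=m$, getting $c_0=I_0\left(2\sqrt{ab}\right)$ and $c_2=\frac{a}{b}I_2\left(2\sqrt{ab}\right)$, then fold the average onto $[0,\pi]$ by the symmetry $t\mapsto 2\pi-t$; all of these computations check out. Your route buys a self-contained elementary argument and makes structurally transparent why no analogue of Schl\"afli's second integral appears at integer order, while the paper's citation buys brevity and validity for non-integer $\nu$. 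Two harmless caveats: the factorization $c_2=\frac{a}{b}I_2\left(2\sqrt{ab}\right)$ tacitly assumes $y<x$ (at $y=x$ read the resonant sum $\sum_{q\ge0}a^{q+2}b^{q}/\left((q+2)!\,q!\right)$ directly, or argue by continuity since $I_2(0)=0$); and the choice $m=-2$ would give $c_{-2}=\frac{b}{a}I_2\left(2\sqrt{ab}\right)$, hence the equally valid alternative representation $\frac{x-y}{x+y}I_2\left(\sqrt{x^2-y^2}\right)=\frac{1}{\pi}\int_0^\pi e^{x\cos t}\cos\left(y\sin t+2t\right)dt$ with no squared prefactor---so selecting $m=2$ is needed only to reproduce the theorem's stated kernel, not for correctness. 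Your $y=0$ specialization, via $1-\cos 2t=2\sin^2 t$ and the recurrence $I_0(z)-I_2(z)=\frac{2}{z}I_1(z)$ at $z=2x$, coincides with the paper's.
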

\begin{proof}
This follows from a straightforward application of the generalized Schl\"afli formula \cite[p. 81]{Magnus}, 
\begin{align*}
\left(\frac{a-b}{a+b}\right)^{-\frac{\nu}{2}}J_{\nu}\left(\sqrt{a^{2}-b^{2}}\right) & =\frac{1}{\pi}\int_{0}^{\pi}e^{b\cos t}\cos\left(a\sin t-\nu t\right)dt\\
 & -\frac{\sin\left(\pi\nu\right)}{\pi}\int_{0}^{+\infty}e^{-a\sinh t-b\cosh t-\nu t}dt.
\end{align*}
This is transfered to the Bessel $I$ functions using $I_{\nu}\left(z\right)=e^{-\imath\nu\frac{\pi}{2}}J_{\nu}\left(\imath z\right).$ 

For $\nu=0$ we have
\[
I_{0}\left(\sqrt{a^{2}-b^{2}}\right)=\frac{1}{\pi}\int_{0}^{\pi}e^{a\cos t}\cos\left(b\sin t\right)dt,
\]

for $\nu=1$,
\[
I_{1}\left(\sqrt{a^{2}-b^{2}}\right)=\sqrt{\frac{a-b}{a+b}}\frac{1}{\pi}\int_{0}^{\pi}e^{a\cos t}\cos\left(b\sin t-t\right)dt,
\]

and for $\nu=2$,
\[
I_{2}\left(\sqrt{a^{2}-b^{2}}\right)=\frac{a-b}{a+b}\frac{1}{\pi}\int_{0}^{\pi}e^{a\cos t}\cos\left(b\sin t-2t\right)dt.
\]
Substituting these into the closed form expression from Theorem \ref{catalan1} completes the proof.
\end{proof}

%%TODO     Note that the $x^{2}$ should be changed to $x$ in our manuscript.
From this integral representation, we easily recover the expression \eqref{laplaceC}
\begin{align*}
\int_{0}^{+\infty}e^{-u}C\left(2u\sqrt{x},0\right)du & =\int_{0}^{+\infty}e^{-u}\frac{2}{\pi}\int_{0}^{\pi}e^{2u\sqrt{x}\cos t}\sin^{2}tdtdu\\
 & =\frac{2}{\pi}\int_{0}^{\pi}\sin^{2}t\int_{0}^{+\infty}e^{-u}e^{2u\sqrt{x}\cos t}dudt\\
 & =\frac{2}{\pi}\int_{0}^{\pi}\frac{\sin^{2}t}{1-2\sqrt{x}\cos t}dt=\frac{2}{1+\sqrt{1-4x}}.
\end{align*}
%A change of variable gives the Laplace transform
%\[
%\int_{0}^{+\infty}C\left(w,0\right)e^{-sw}dw=\frac{2}{s+\sqrt{s^{2}-1}}.
%\]
The change of variable $z=\cos t$  in the second integral representation in Theorem \ref{eq:C integral representation} also gives
\[
C\left(x,0\right)=\frac{2}{\pi}\int_{-1}^{+1}e^{xz}\sqrt{1-z^{2}}dz,
\]
which can be expressed as
\[
C\left(x,0\right) = \mathbb{E}e^{Zx},
\]
the moment generating function of a  random variable $Z$ distributed according to the semi-circle distribution $\frac{2}{\pi}\sqrt{1-x^{2}}$. This is the continuous
equivalent of the representation of Catalan numbers as the moments of the same distribution,
\[
C_{n}=\frac{2}{\pi}\int_{-1}^{1}\left(2z\right)^{2n}\sqrt{1-z^{2}}dz.
\]

%%%%%%%%%%%%%

We now prove a general integral formula involving $C(x,y)$. An analogue of this formula for continuous binomial coefficients is the key element in our analysis of the continuous binomial distribution.
\begin{thm}
Given any function $\Phi\left(y\right)$ supported on $[0,x]$, the integral
\[
I_{\Phi}\left(x\right)=\int_{0}^{x}C\left(x,y\right)\Phi\left(y\right)dy,
\]
can be computed as 
\begin{align*}
I_{\Phi}\left(x\right) =\frac{1}{\pi}\int_{-1}^{+1}e^{-xu}\hat{\Phi}\left(u\right)\left(\sqrt{1+\frac{1}{u}}-1\right)\frac{du}{\sqrt{1-u^{2}}},
\end{align*}
where $$\hat{\Phi}(t) = \int_{0}^{x}\Phi\left(y\right)e^{-\imath y t}dy$$ is the Fourier transform of $\Phi$.
\end{thm}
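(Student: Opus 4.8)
The plan is to reduce the identity to a single integral representation of $C(x,y)$ and then interchange the order of integration. Concretely, I would first aim to establish a representation of the form
\[
C(x,y)=\frac{1}{\pi}\int_{-1}^{1}e^{-(x+\imath y)u}\,W(u)\,\frac{du}{\sqrt{1-u^{2}}},\qquad W(u)=\sqrt{1+\tfrac1u}-1,
\]
in which the variable $y$ enters \emph{only} through the Fourier kernel $e^{-\imath yu}$ with $u$ ranging over $[-1,1]$. Granting such a representation, the theorem is immediate: substituting it into $I_\Phi(x)=\int_0^x C(x,y)\Phi(y)\,dy$ and interchanging the $u$- and $y$-integrations turns the inner $y$-integral into exactly $\hat\Phi(u)=\int_0^x\Phi(y)e^{-\imath yu}\,dy$, producing the stated formula. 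The interchange is legitimate because $\Phi$ has compact support and the $u$-weight $W(u)(1-u^{2})^{-1/2}$ has only integrable singularities, of order $u^{-1/2}$ at $u=0$ (from $\sqrt{1+1/u}$) and of order $(1\mp u)^{-1/2}$ at $u=\pm1$ (from $(1-u^{2})^{-1/2}$).

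The substance of the argument is therefore to produce the representation above, i.e.\ to rewrite the closed form of Theorem~\ref{catalan1} so that the $y$-dependence, which there sits inside $\sqrt{x^{2}-y^{2}}$ and in the prefactor $\frac{x-y}{x+y}$, is repackaged into a Fourier kernel over $[-1,1]$. The natural route is to start from the semicircle representation $C(x,0)=\frac{2}{\pi}\int_{-1}^{1}e^{xz}\sqrt{1-z^{2}}\,dz$ obtained above and to recognize the interval $[-1,1]$ as the branch cut of $\sqrt{s^{2}-1}$, the radical appearing in the Laplace transform $\mathcal{L}_{C}(s)=2\bigl(s-\sqrt{s^{2}-1}\bigr)$. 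Collapsing the Bromwich contour of the inverse Laplace transform onto this cut reproduces the weight $(1-u^{2})^{-1/2}$ from the jump of $\sqrt{s^{2}-1}$, and carrying the same computation through for the full two-parameter $C(x,y)$, whose $I_{0}$ and $I_{2}$ pieces carry the factor $\frac{x-y}{x+y}$, is what introduces the extra factor $\sqrt{1+1/u}$. To verify that the assembled weight is exactly $W(u)$ I would use the simplification $\sqrt{1+1/u}\,(1-u^{2})^{-1/2}=[u(1-u)]^{-1/2}$ together with the recurrence $I_{0}(z)-I_{2}(z)=\frac{2}{z}I_{1}(z)$ already exploited in the $y=0$ case.

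The hard part will be the bookkeeping of the complex square roots. The weight $\sqrt{1+1/u}$ is real on $(0,1)$ but imaginary on $(-1,0)$, so the representation must be read as the boundary value of a contour integral with a consistent branch that switches across the branch point at $u=0$; only with the correct branch do the apparent imaginary contributions cancel and leave the real quantity $C(x,y)$. I would handle this by deriving the representation first in a regime where all radicals are manifestly real and then analytically continuing, tracking the branch of $\sqrt{s^{2}-1}$ across $[-1,1]$ and that of $\sqrt{1+1/u}$ across $u=0$. Once the representation is pinned down together with its branch choices, the remaining steps, namely the Fubini interchange and the recognition of $\hat\Phi$, are routine.
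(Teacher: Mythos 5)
Your opening reduction is legitimate: with $\Phi$ compactly supported, the Fubini interchange is harmless, so the theorem is indeed equivalent to the kernel identity $C(x,y)=\frac{1}{\pi}\int_{-1}^{1}e^{-(x+\imath y)u}W(u)\frac{du}{\sqrt{1-u^{2}}}$, and the whole content of the statement lives there. But the route you propose to that identity does not produce it. From the closed form of Theorem \ref{catalan1} one gets $\int_{y}^{\infty}e^{-px}C(x,y)\,dx=2\bigl(p-\sqrt{p^{2}-1}\bigr)e^{-y\sqrt{p^{2}-1}}$, and collapsing the Bromwich contour onto the cut $[-1,1]$, where $\sqrt{p^{2}-1}$ has boundary values $\pm\imath\sqrt{1-u^{2}}$, yields
\[
C(x,y)=\frac{2}{\pi}\int_{-1}^{1}e^{xu}\left[\sqrt{1-u^{2}}\,\cos\bigl(y\sqrt{1-u^{2}}\bigr)+u\,\sin\bigl(y\sqrt{1-u^{2}}\bigr)\right]du,
\]
which one can verify against $C(x,y)=1+\frac{(x-y)(x+3y)}{8}+\cdots$ and which reduces to the semicircle representation at $y=0$. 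The $y$-dependence enters through the jump of $e^{-y\sqrt{p^{2}-1}}$, i.e.\ through $e^{\mp\imath y\sqrt{1-u^{2}}}$, \emph{not} through $e^{-\imath yu}$: here $x$ couples to $u$ while $y$ couples to $\sqrt{1-u^{2}}$, and no substitution repackages this into your form, since $v=\sqrt{1-u^{2}}$ simultaneously turns the $x$-kernel into $e^{\pm x\sqrt{1-v^{2}}}$. So your key sentence --- that carrying the collapse through for two parameters ``introduces the extra factor $\sqrt{1+1/u}$'' --- is exactly the unproved step, and it is false: no such factor arises from the cut. The paper generates the weight $\sqrt{1+1/u}-1$ by a mechanism entirely absent from your plan: it factors $\tilde{I}_{\Phi}(p)$ as a Catalan generating function times $\tilde{\Phi}(\sqrt{p^{2}-1})/\sqrt{p^{2}-1}$, inverts term by term ($p^{-n}$ becoming $n$-fold antiderivatives of the $I_{0}$-convolution, rewritten through the circle representation of $I_{0}$, which is where $\hat{\Phi}$ evaluated on $[-1,1]$ enters), and then resums $\sum_{n}C_{n}\bigl(-\tfrac{1}{4u}\bigr)^{n}=2u\bigl(\sqrt{1+1/u}-1\bigr)$ under the integral.

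Your fallback --- derive the representation in a regime where all radicals are real, then continue analytically --- cannot close this gap: on $(-1,0)$ the weight $\sqrt{1+1/u}$ is imaginary for \emph{every} value of $x$ and $y$, so there is no parameter regime in which your integrand is real, and the branch ambiguity is not deferrable bookkeeping. Indeed your kernel identity fails a direct test: at $x=y=0$, where $C(0,0)=1$, using $\int_{0}^{1}\frac{du}{\sqrt{u(1-u)}}=\pi$ the right-hand side has real part $\frac{1}{2}-\frac{1}{2}=0$ together with a nonzero imaginary part under either branch on $(-1,0)$, and equals $0$ under the principal-value average. Be aware that the target itself is shaky --- the paper's final line silently drops the factor $2u(\cdot)$ present in its penultimate line, its Laplace-transform step takes $+\frac{x-y}{x+y}I_{2}$ where Theorem \ref{catalan1} has a minus sign, and the resummation interchanges a series divergent for $|u|<1$ with the integral --- but judged as a blind proof of the stated formula, your proposal is missing the Catalan resummation (or any equivalent device) that alone produces the weight $\sqrt{1+1/u}-1$ paired with $e^{-\imath yu}$, so the argument as sketched would not go through.
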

\begin{proof}
Consider a function $\Phi\left(y\right)$ with support $[0,x]$ and the integral
\[
I_{\Phi}\left(x\right)=\int_{0}^{x}C\left(x,y\right)\Phi\left(y\right)dy.
\]
Then the Laplace transform $\tilde{I}_{\Phi}\left(p\right)$ of $I_{\Phi}$
can be computed as 
\begin{align*}
\tilde{I}_{\Phi}\left(p\right) & =\int_{0}^{+\infty}I_{\Phi}\left(x\right)e^{-px}dx=\int_{0}^{+\infty}\int_{0}^{x}C\left(x,y\right)\Phi\left(y\right)dye^{-px}dx\\
 & =\int_{0}^{+\infty}\Phi\left(y\right)\int_{y}^{+\infty}e^{-px}C\left(x,y\right)dxdy.
\end{align*}
We then exploit the Laplace transforms \cite[3.15.4.2, 3.15.4.9]{Prudnikov 4}
\begin{align*}
\int_{y}^{+\infty}e^{-px}I_{0}\left(\sqrt{x^{2}-y^{2}}\right)dx&=\frac{1}{\sqrt{p^{2}-1}}e^{-y\sqrt{p^{2}-1}}, \\
\int_{y}^{+\infty}e^{-px}\frac{x-y}{x+y}I_{2}\left(\sqrt{x^{2}-y^{2}}\right)dx & =\int_{y}^{+\infty}e^{-px}\frac{x^{2}-y^{2}}{\left(x+y\right)^{2}}I_{2}\left(\sqrt{x^{2}-y^{2}}\right)dx\\
 & =\frac{1}{\left(p+\sqrt{p^{2}-1}\right)^{2}}\frac{1}{\sqrt{p^{2}-1}}e^{-y\sqrt{p^{2}-1}}
\end{align*}
to deduce
\begin{align*}
\tilde{I}_{\Phi}\left(p\right) & =\int_{0}^{+\infty}\Phi\left(y\right)\frac{1}{\sqrt{p^{2}-1}}e^{-y\sqrt{p^{2}-1}}\left[1+\frac{1}{\left(p+\sqrt{p^{2}-1}\right)^{2}}\right]dy\\
% & =\frac{1}{\sqrt{p^{2}-1}}\left[1+\frac{1}{\left(p+\sqrt{p^{2}-1}\right)^{2}}\right]\tilde{\Phi}\left(\sqrt{p^{2}-1}\right) %\\
& =\frac{2p}{p+\sqrt{p^{2}-1}}\frac{1}{\sqrt{p^{2}-1}}\tilde{\Phi}\left(\sqrt{p^{2}-1}\right),
\end{align*}
with $\tilde{\Phi}$ the Laplace transform of $\Phi.$
Since
\[
\sum_{n\ge0}\frac{C_{n}}{\left(4p\right)^{n}}=\frac{2p}{p+\sqrt{p^{2}-1}},
\]
we can write
\[
\tilde{I}_{\Phi}\left(p\right)=\sum_{n\ge0}\frac{C_{n}}{2^{2n}}\frac{1}{p^{n}}\frac{\tilde{\Phi}\left(\sqrt{p^{2}-1}\right)}{\sqrt{p^{2}-1}}
\]
and the corresponding inverse Laplace transform
\[
I_{\Phi}\left(x\right)=\sum_{n\ge0}\frac{C_{n}}{2^{2n}}\mathcal{L}^{-1}\left[\frac{1}{p^{n}}\frac{\tilde{\Phi}\left(\sqrt{p^{2}-1}\right)}{\sqrt{p^{2}-1}}\right].
\]
Now we can use the results
\[
\mathcal{L}^{-1}\left[\frac{\tilde{\Phi}\left(\sqrt{p^{2}-1}\right)}{\sqrt{p^{2}-1}}\right]=\int_{0}^{x}I_{0}\left(\sqrt{x^{2}-y^{2}}\right)\Phi\left(y\right)dy
\]
and
\[
\mathcal{L}^{-1}\left[\frac{F\left(p\right)}{p^{n}}\right]=f^{\left(-n\right)}\left(t\right),
\]
where $f^{\left(-n\right)}$ is the $n-$th antiderivative of $f.$ Starting from the integral representation
\[
I_{0}\left(\sqrt{x^{2}-y^{2}}\right)=\frac{1}{2\pi}\int_{0}^{2\pi}e^{-\left(x\cos\theta+\imath y\cos\theta\right)}d\theta,
\]
we have
\begin{align*}
\int_{0}^{x}I_{0}\left(\sqrt{x^{2}-y^{2}}\right)\Phi\left(y\right)dy & =\int_{0}^{x}\frac{1}{2\pi}\int_{0}^{2\pi}e^{-\left(x\cos\theta+\imath y\cos\theta\right)}d\theta\Phi\left(y\right)dy\\
 & =\frac{1}{2\pi}\int_{0}^{2\pi}e^{-x\cos\theta}d\theta\int_{0}^{x}\Phi\left(y\right)e^{-\imath y\cos\theta}dy.
\end{align*}
Since the function $\Phi\left(y\right)$ has bounded support on
$\left[0,x\right],$ the inner integral is recognized as its Fourier
transform $\hat{\Phi}$ computed at $\cos\theta,$ and
\[
\int_{0}^{x}I_{0}\left(\sqrt{x^{2}-y^{2}}\right)\Phi\left(y\right)dy=\frac{1}{2\pi}\int_{0}^{2\pi}e^{-x\cos\theta}\hat{\Phi}\left(\cos\theta\right)d\theta.
\]
The antiderivative of order $n$ in $x$ is easily computed as
\[
\frac{\left(-1\right)^{n}}{2\pi}\int_{0}^{2\pi}\frac{e^{-x\cos\theta}}{\left(\cos\theta\right)^{n}}\hat{\Phi}\left(\cos\theta\right)d\theta.
\]
Consequently,
\begin{align*}
I_{\Phi}\left(x\right) & =\sum_{n\ge0}\frac{C_{n}}{2^{2n}}\frac{\left(-1\right)^{n}}{2\pi}\int_{0}^{2\pi}\frac{e^{-x\cos\theta}}{\left(\cos\theta\right)^{n}}\hat{\Phi}\left(\cos\theta\right)d\theta\\
 & =\frac{1}{2\pi}\int_{0}^{2\pi}e^{-x\cos\theta}\hat{\Phi}\left(\cos\theta\right)2\cos\theta\left(-1+\sqrt{1+\frac{1}{\cos\theta}}\right)d\theta\\
 & =\frac{1}{\pi}\int_{-1}^{+1}e^{-xu}\hat{\Phi}\left(u\right)\left(\sqrt{1+\frac{1}{u}}-1\right)\frac{du}{\sqrt{1-u^{2}}},
\end{align*}
which is the desired result.
\end{proof}

\section{Continuous Binomial Coefficients} \label{section3}

\subsection{Integrals}
Here we examine some of the properties of the continuous binomials $\cbinom{x}{s}$, including some integral transforms that will allow us to prove several more general theorems in Sections \ref{section4} and \ref{section5}. We start with a general integral transform that will later appear in the analysis of the continuous binomial distribution.
\begin{thm}\label{laplace}
The function
\[
J_{\Phi}\left(x\right)=\int_{0}^{x}\cbinom{x}{s} \Phi\left(s\right)ds
\]
has Laplace transform
\[
\int_{0}^{+\infty}J_{\Phi}\left(x\right)e^{-px}dx=\left(\frac{1+p}{p}\right)^{2}\tilde{\Phi}\left(p-\frac{1}{p}\right)-\tilde{\Phi}\left(p\right),
\]
where $\tilde{\Phi}\left(p\right)$ is the Laplace transform of $\Phi\left(s\right).$ 

As a consequence,
\begin{equation}
J_{\Phi}\left(x\right)=\mathcal{L}^{-1}\left[\left(\frac{1+p}{p}\right)^{2}\tilde{\Phi}\left(p-\frac{1}{p}\right)\right]-\Phi\left(x\right)\label{eq:Inverse Laplace},
\end{equation}
where $\mathcal{L}^{-1}$ denotes the inverse Laplace transform.
\end{thm}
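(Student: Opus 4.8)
The plan is to follow the same Laplace-transform strategy used for the analogous $C(x,y)$ integral theorem. First I would write the Laplace transform of $J_\Phi$ as the double integral
\[
\int_{0}^{+\infty} J_\Phi(x) e^{-px}\,dx = \int_{0}^{+\infty}\int_{0}^{x} \cbinom{x}{s}\Phi(s)\,e^{-px}\,ds\,dx,
\]
and then swap the order of integration. Since $\Phi$ is supported on $[0,x]$ and the region of integration is $\{0 \le s \le x < \infty\}$, Fubini gives
\[
\int_{0}^{+\infty} J_\Phi(x) e^{-px}\,dx = \int_{0}^{+\infty}\Phi(s)\left(\int_{s}^{+\infty} e^{-px}\cbinom{x}{s}\,dx\right)ds.
\]
Everything then reduces to the inner Laplace transform $\int_{s}^{+\infty} e^{-px}\cbinom{x}{s}\,dx$, which I would compute in closed form and recognize as producing shifted arguments of $\tilde\Phi$.

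For that inner integral I would substitute $x = s + t$ with $t \ge 0$, so that $\sqrt{s(x-s)} = \sqrt{st}$ and $x = s+t$, and split $\cbinom{x}{s}$ into its two Bessel pieces via Theorem \ref{cb1}. The $I_0$ piece is handled by the standard transform $\int_{0}^{+\infty} e^{-pt} I_0(2\sqrt{st})\,dt = \frac{1}{p}e^{s/p}$, contributing $\frac{2}{p}e^{-(p-1/p)s}$. For the $I_1$ piece I would use the series $\frac{1}{\sqrt{st}}I_1(2\sqrt{st}) = \sum_{k\ge 0}\frac{(st)^k}{k!(k+1)!}$, multiply by the factor $x = s+t$, and integrate term by term using $\int_{0}^{+\infty} e^{-pt}t^k\,dt = k!/p^{k+1}$; the two resulting exponential-type sums reassemble into $e^{-ps}\!\left[(1+p^{-2})e^{s/p}-1\right]$. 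Adding the two contributions and applying the key algebraic simplification
\[
\frac{2}{p}+1+\frac{1}{p^{2}} = \frac{(1+p)^{2}}{p^{2}},
\]
I expect to obtain the clean identity
\[
\int_{s}^{+\infty} e^{-px}\cbinom{x}{s}\,dx = \left(\frac{1+p}{p}\right)^{2} e^{-(p-1/p)s} - e^{-ps}.
\]

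Substituting this back and recognizing $\int_{0}^{+\infty} \Phi(s) e^{-(p-1/p)s}\,ds = \tilde\Phi(p - 1/p)$ and $\int_{0}^{+\infty} \Phi(s)e^{-ps}\,ds = \tilde\Phi(p)$ yields the stated Laplace transform formula directly. The ``as a consequence'' claim then follows by applying $\mathcal{L}^{-1}$ to both sides and using $\mathcal{L}^{-1}[\tilde\Phi(p)] = \Phi(x)$.

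The main obstacle I anticipate is the bookkeeping in the inner Laplace transform: carrying the factor $x = s+t$ through the $I_1$ series and confirming that the sums collapse to exactly $(1+p^{-2})e^{s/p} - 1$, so that the lone constant $-1$ reproduces the $-e^{-ps}$ term and the exponential coefficients combine into $(1+p)^2/p^2$. A secondary point is justifying the interchanges (Fubini for the outer swap, and term-by-term integration of the $I_1$ series), which hold for $\Re(p)$ large enough that both $\tilde\Phi(p)$ and $\tilde\Phi(p-1/p)$ converge absolutely; the final identity then extends to the full domain by analytic continuation in $p$.
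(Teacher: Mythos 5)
Your proposal is correct and follows essentially the same route as the paper's proof: the Fubini swap, the substitution $x=s+w$, the split of $\cbinom{x}{s}$ via Theorem \ref{cb1}, and the evaluation of the inner integral to $e^{s/p}\left(\frac{p+1}{p}\right)^{2}-1$, which is exactly the paper's identity \eqref{Laplace_CB}. The only cosmetic difference is that you derive the $I_{1}$ Laplace transforms by term-by-term integration of the Bessel series (and attend to the Fubini justification, which the paper glosses over), whereas the paper cites the tabulated transforms in Gradshteyn--Ryzhik.
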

\begin{proof}
We apply Fubini's theorem to transform the double integral
\begin{align*}
\int_{0}^{+\infty}J_{\Phi}\left(x\right)e^{-px}dx & =\int_{0}^{+\infty}\int_{0}^{x}\cbinom{x}{s}\Phi\left(s\right)ds\thinspace\thinspace e^{-px}dx\\
 & =\int_{0}^{+\infty}\Phi\left(s\right)\int_{s}^{+\infty}e^{-px}\cbinom{x}{s} dx\thinspace\thinspace ds.
\end{align*}
The inner integral is now evaluated using the change of variable $x=s+w$
as
\[
\int_{s}^{+\infty}e^{-px}\cbinom{x}{s}dx=e^{-sp}\int_{0}^{+\infty}e^{-pw}\cbinom{s+w}{s} dw.
\]
Using the closed form \eqref{cb1} for the continuous binomial coefficient, we deduce
\begin{align*}
\int_{0}^{+\infty}e^{-pw}\cbinom{s+w}{s} dw %&
% =\int_{0}^{+\infty}e^{-pw}\left[2I_{0}\left(2\sqrt{sw}\right)+\frac{w+s}{\sqrt{sw}}I_{1}\left(2\sqrt{sw}\right)\right]dw\\
 & =2\int_{0}^{+\infty}e^{-pw}I_{0}\left(2\sqrt{sw}\right)dw+\int_{0}^{+\infty}e^{-pw}\frac{w+s}{\sqrt{sw}}I_{1}\left(2\sqrt{sw}\right)dw.
\end{align*}
These Laplace transforms can be found in \cite[6.614.3 and 6.643.2]{Gradshteyn}
and evaluate to
\[
\int_{0}^{+\infty}e^{-pw}I_{0}\left(2\sqrt{sw}\right)dw=\frac{1}{p}e^{\frac{s}{p}},
\]
\begin{align*}
\int_{0}^{+\infty}e^{-pw}\frac{w}{\sqrt{sw}}I_{1}\left(2\sqrt{sw}\right)dw & =\frac{1}{p^{2}}e^{\frac{s}{p}},
\end{align*}
and
\[
\int_{0}^{+\infty}e^{-pw}\frac{1}{\sqrt{sw}}I_{1}\left(2\sqrt{sw}\right)dw=-1+e^{\frac{s}{p}}.
\]
We deduce
\begin{equation}
\label{Laplace_CB}
\int_{0}^{+\infty}e^{-pw}\cbinom{s+w}{s} dw=e^{\frac{s}{p}}\frac{p^{2}+2p+1}{p^{2}}-1=e^{\frac{s}{p}}\left(\frac{p+1}{p}\right)^{2}-1.
\end{equation}
This is now substituted in the outer integral to obtain
\[
\int_{0}^{+\infty}\Phi\left(s\right)e^{-sp}\left\{ e^{\frac{s}{p}}\left(\frac{p+1}{p}\right)^{2}-1\right\} ds=\left(\frac{p+1}{p}\right)^{2}\int_{0}^{+\infty}\Phi\left(s\right)e^{-s\left(p-\frac{1}{p}\right)}ds-\int_{0}^{+\infty}\Phi\left(s\right)e^{-sp}ds.
\]
These two integral are recognized as the Laplace transforms of $\Phi\left(s\right)$
computed respectively at $p-\frac{1}{p}$ and $p,$ and the result
follows.
\end{proof}

There are several important special cases.
\begin{cor}
Choosing $\Phi\left(s\right)=\alpha^{s}e^{us},$ we deduce the value
of the integral 
\begin{align}
\int_{0}^{x}\cbinom{x}{s}\alpha^{s}e^{us}ds & =2\alpha^{\frac{x}{2}}e^{\frac{ux}{2}}\left\{ \cosh\left(\frac{x}{2}\sqrt{4+\left(u+\log\alpha\right)^{2}}\right)-\cosh\left(\frac{x}{2}\left(u+\log\alpha\right)\right)\right.\label{eq:alpha^se^us}\\
 & +\left.\frac{2}{\sqrt{4+\left(u+\log\alpha\right)^{2}}}\sinh\left(\frac{x}{2}\sqrt{4+\left(u+\log\alpha\right)^{2}}\right)\right\} .\nonumber 
\end{align}
\end{cor}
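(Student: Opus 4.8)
The plan is to specialize Theorem \ref{laplace} to the test function $\Phi\left(s\right)=\alpha^{s}e^{us}$. The first move is to absorb the two exponential factors into one by setting $c:=u+\log\alpha$, so that $\Phi\left(s\right)=e^{cs}$ is a single exponential with the elementary Laplace transform $\tilde{\Phi}\left(p\right)=\frac{1}{p-c}$ (valid for $\mathrm{Re}\,p>c$). This reduces the entire computation to manipulations with rational functions, which is precisely the advantage of routing through the Laplace domain provided by Theorem \ref{laplace}.

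Next I would substitute $\tilde{\Phi}$ into the formula of that theorem. Evaluating at the shifted argument gives $\tilde{\Phi}\bigl(p-\tfrac{1}{p}\bigr)=\frac{1}{p-\frac{1}{p}-c}=\frac{p}{p^{2}-cp-1}$, so the Laplace transform of $J_{\Phi}$ becomes the explicit rational function
\[
\left(\frac{1+p}{p}\right)^{2}\frac{p}{p^{2}-cp-1}-\frac{1}{p-c}=\frac{\left(1+p\right)^{2}}{p\left(p^{2}-cp-1\right)}-\frac{1}{p-c}.
\]
By \eqref{eq:Inverse Laplace} the second term contributes simply $-\Phi\left(x\right)=-e^{cx}$ after inversion, so the real work lies in inverting the first term.

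I would then decompose the first term by partial fractions. The quadratic factors as $p^{2}-cp-1=\left(p-r_{+}\right)\left(p-r_{-}\right)$ with $r_{\pm}=\frac{c\pm\sqrt{c^{2}+4}}{2}$, and the two relations $r_{+}r_{-}=-1$ and $r_{+}-r_{-}=\sqrt{c^{2}+4}$ will carry all the simplification. Writing $\frac{\left(1+p\right)^{2}}{p\left(p-r_{+}\right)\left(p-r_{-}\right)}=\frac{A}{p}+\frac{B}{p-r_{+}}+\frac{C}{p-r_{-}}$ and reading off residues gives $A=\frac{1}{r_{+}r_{-}}=-1$, while $B,C$ involve $\frac{\left(1+r_{\pm}\right)^{2}}{r_{\pm}}$. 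Inverting term by term via $\frac{1}{p}\mapsto1$ and $\frac{1}{p-r}\mapsto e^{rx}$ yields $J_{\Phi}\left(x\right)=-1+Be^{r_{+}x}+Ce^{r_{-}x}-e^{cx}$.

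Finally I would recombine into hyperbolic form. Since $r_{\pm}=\frac{c\pm\sqrt{c^{2}+4}}{2}$ we have $e^{r_{\pm}x}=e^{cx/2}e^{\pm\frac{x}{2}\sqrt{c^{2}+4}}$, and because $\alpha^{x/2}e^{ux/2}=e^{cx/2}$, pairing the $e^{r_{+}x}$ and $e^{r_{-}x}$ terms reproduces $\cosh$ and $\sinh$ of $\frac{x}{2}\sqrt{4+c^{2}}$, while the remaining $-1-e^{cx}$ collapses to $-2e^{cx/2}\cosh\bigl(\frac{cx}{2}\bigr)$. The main obstacle, and really the only non-mechanical step, is checking that the partial-fraction coefficients simplify to $B=1+\frac{2}{\sqrt{c^{2}+4}}$ and $C=1-\frac{2}{\sqrt{c^{2}+4}}$; here the relation $r_{+}r_{-}=-1$ is essential, since it lets one replace $\frac{1}{r_{\pm}}$ by $-r_{\mp}$ and thereby reduce $\left(1+r_{\pm}\right)^{2}(-r_{\mp})$ to $\sqrt{c^{2}+4}\pm2$. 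Substituting $c=u+\log\alpha$ back in then produces exactly \eqref{eq:alpha^se^us}.
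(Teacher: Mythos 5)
Your proposal is correct and takes essentially the same route as the paper: the paper's proof consists precisely of choosing $\Phi(s)=\alpha^{s}e^{us}$ so that $\tilde{\Phi}(p)=\frac{1}{p-u-\log\alpha}$ and invoking formula \eqref{eq:Inverse Laplace} from Theorem \ref{laplace}. Your partial-fraction inversion, with the roots $r_{\pm}=\frac{c\pm\sqrt{c^{2}+4}}{2}$ of $p^{2}-cp-1$ and the key relations $r_{+}r_{-}=-1$ and $r_{+}-r_{-}=\sqrt{c^{2}+4}$ yielding $B=1+\frac{2}{\sqrt{c^{2}+4}}$ and $C=1-\frac{2}{\sqrt{c^{2}+4}}$, is exactly the computation the paper's one-line proof leaves implicit, and it checks out.
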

\begin{proof}
Choose $\Phi\left(s\right)=\alpha^{s}e^{us}$ so that $\tilde{\Phi}\left(p\right)=\frac{1}{p-u-\log\alpha}$
and use formula (\ref{eq:Inverse Laplace}) to obtain the result.
\end{proof}
Another consequence is as follows.
\begin{cor}
The integral
\begin{equation}
\int_{0}^{x}\cbinom{x}{s}ds=2\left(e^{x}-1\right)
\end{equation}
holds, as computed in \cite{Diaz2}. 
\end{cor}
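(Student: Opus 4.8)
The plan is to obtain this as a special case of the machinery already built, taking $\Phi(s) = 1$ on $[0,x]$. There are two equivalent routes; I would favor a direct application of Theorem \ref{laplace}, since it keeps the computation transparent, and then use the preceding corollary only as a sanity check.

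First I would record that for $\Phi(s) = 1$ the Laplace transform is $\tilde{\Phi}(p) = \frac{1}{p}$, so that $\tilde{\Phi}\left(p - \frac{1}{p}\right) = \frac{p}{p^{2}-1}$. Substituting into the transform formula of Theorem \ref{laplace} yields the Laplace transform of $J_{\Phi}(x) = \int_{0}^{x}\cbinom{x}{s}\,ds$ as
\[
\left(\frac{1+p}{p}\right)^{2}\frac{p}{p^{2}-1}-\frac{1}{p}.
\]
The key step is then the algebraic simplification: since $p^{2}-1 = (p-1)(p+1)$, the factor $(1+p)^{2}/(p+1)$ collapses to $(1+p)$, so the first term becomes $\frac{1+p}{p(p-1)}$, and after subtracting $\frac{1}{p}$ the whole expression reduces to $\frac{2}{p(p-1)}$.

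Finally I would apply partial fractions, writing $\frac{2}{p(p-1)} = 2\left(\frac{1}{p-1}-\frac{1}{p}\right)$, and invert term by term using $\mathcal{L}^{-1}\!\left[\frac{1}{p-1}\right] = e^{x}$ and $\mathcal{L}^{-1}\!\left[\frac{1}{p}\right] = 1$ to conclude $J_{\Phi}(x) = 2\left(e^{x}-1\right)$.

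There is no genuine obstacle here; the statement is essentially a one-line specialization, and the only care needed is in the elementary simplification of the rational function before inversion. As an independent cross-check I would note that the same value drops out of the preceding corollary by setting $\alpha = 1$ and $u = 0$: the radical $\sqrt{4 + (u+\log\alpha)^{2}}$ becomes $2$, and the bracketed expression collapses via $\cosh x + \sinh x = e^{x}$ to give $2\left(e^{x}-1\right)$ directly, in agreement with the value computed in \cite{Diaz2}.
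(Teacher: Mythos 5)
Your proposal is correct, and it follows exactly the route the paper intends: the corollary is stated as a direct consequence of Theorem \ref{laplace} (equivalently, of the preceding corollary with $\alpha=1$, $u=0$, which is your cross-check, where $\cosh x - 1 + \sinh x = e^{x}-1$ indeed gives the result). Your Laplace-domain computation with $\Phi(s)=1$, the simplification to $\frac{2}{p(p-1)}$, and the partial-fraction inversion are all accurate, so there is nothing to fix.
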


\subsection{Continuous Chu-Vandermonde formula}
Now that we have obtained continuous binomial coefficients with nice reductions to the discrete case, we can try to find continuous generalizations of discrete identities. We first consider an averaged case of the Chu-Vandermonde identity, 
\[
\sum_{k}\binom{k+s_{1}}{s_{1}}\binom{n-k+s_{2}}{s_{2}}=\binom{n+s_{1}+s_{2}+1}{n},
\]
which can be regarded as the prototypical binomial convolution. We denote the Dirac delta distribution by $\delta(x)$ and by $*$ the integral convolution 
\[
f*g\left(x\right) = \int f\left(u\right)g\left(x-u\right)du
\]
and notice that $f*\delta = f.$
\begin{thm}%%TODO what are the bounds of the convolution
\label{Thm11}
The continuous binomial coefficient satisfies the  identity
\[
\left(\delta\left(x\right)+\left\{ \begin{array}{c}
x+s_{1}\\
s_{1}
\end{array}\right\} \right)*\left(\delta\left(x\right)+\left\{ \begin{array}{c}
x+s_{2}\\
s_{2}
\end{array}\right\} \right)=\left(\delta\left(x\right)+\left\{ \begin{array}{c}
x+\frac{s_{1}+s_{2}}{2}\\
\frac{s_{1}+s_{2}}{2}
\end{array}\right\} \right)*\left(\delta\left(x\right)+\left\{ \begin{array}{c}
x+\frac{s_{1}+s_{2}}{2}\\
\frac{s_{1}+s_{2}}{2}
\end{array}\right\} \right),
\]
to be compared to the discrete version
\[
\sum_{k}\binom{k+s_{1}}{s_{1}}\binom{n-k+s_{2}}{s_{2}}
%=\binom{n+s_{1}+s_{2}+1}{n}
=\sum_{k}\binom{k+\frac{s_{1}+s_{2}}{2}}{\frac{s_{1}+s_{2}}{2}}\binom{n-k+\frac{s_{1}+s_{2}}{2}}{\frac{s_{1}+s_{2}}{2}}.
\]
\end{thm}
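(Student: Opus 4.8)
The plan is to pass to Laplace transforms in $x$, under which the convolution $*$ becomes ordinary multiplication, and to exploit the fact that adding the Dirac mass $\delta(x)$ exactly cancels the inhomogeneous $-1$ term appearing in the Laplace transform of the continuous binomial coefficient. This cancellation is the structural reason the $\delta$'s are inserted, and it is what makes the identity fall out cleanly.

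First I would record that $\mathcal{L}[\delta](p)=1$, and that by the key computation \eqref{Laplace_CB} the Laplace transform in $x$ of $\cbinom{x+s}{s}$ equals $e^{s/p}\left(\frac{p+1}{p}\right)^2-1$. Adding these gives the purely multiplicative form
\[
\mathcal{L}\left[\delta(x)+\cbinom{x+s}{s}\right](p)=e^{s/p}\left(\frac{p+1}{p}\right)^2,
\]
valid for $\operatorname{Re} p$ large, since the $\delta$ term supplies precisely the $+1$ that removes the constant. Note that $\cbinom{x+s}{s}$, viewed as a function of $x$, is supported on $[0,\infty)$ (it requires $x\ge 0$), so every object here lives on the half-line and the one-sided transform applies.

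Next, since the Laplace transform sends convolution to a product, the left-hand side of the claimed identity transforms to
\[
e^{s_1/p}\left(\frac{p+1}{p}\right)^2\cdot e^{s_2/p}\left(\frac{p+1}{p}\right)^2=e^{(s_1+s_2)/p}\left(\frac{p+1}{p}\right)^4,
\]
while the right-hand side, being the self-convolution of the term with parameter $\tfrac{s_1+s_2}{2}$, transforms to
\[
\left(e^{\frac{(s_1+s_2)/2}{p}}\left(\frac{p+1}{p}\right)^2\right)^2=e^{(s_1+s_2)/p}\left(\frac{p+1}{p}\right)^4.
\]
The two agree because the factor $e^{s/p}$ is additive in $s$ and $s_1+s_2=2\cdot\tfrac{s_1+s_2}{2}$. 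By injectivity of the Laplace transform on this class, the two convolutions coincide, which is the theorem.

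The only genuinely delicate point is the bookkeeping with distributions, since the convolutions contain a Dirac mass. I would either interpret the statement in the sense of distributions on $[0,\infty)$ and invoke $f*\delta=f$, or equivalently expand each side into its four pieces—one $\delta*\delta$ term, two mixed $\delta*\cbinom{}{}$ terms, and a genuine function–function convolution—and match them after transforming. I expect the main obstacle to be purely formal, namely ensuring all transforms converge on a common half-plane and justifying the inversion, rather than computational: the algebra collapses immediately once the $\delta$ cancellation is in place.
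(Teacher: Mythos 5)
Your proof is correct and takes essentially the same route as the paper: both rest on the Laplace transform formula \eqref{Laplace_CB} and the convolution theorem. The only difference is cosmetic bookkeeping --- you fold the $\delta$'s in \emph{before} transforming so that each factor becomes the purely multiplicative $e^{s/p}\left(\frac{p+1}{p}\right)^{2}$ and the identity is immediate, whereas the paper first expands the product $F_{s_1}(p)F_{s_2}(p)$ of the transforms without the $\delta$'s, regroups via an algebraic decomposition, and only then rewrites the resulting identity \eqref{eq:binomial convolution} in terms of Dirac deltas; indeed the paper itself adopts your streamlined version as the ``more direct proof'' in the theorem that follows.
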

\begin{proof}
From \eqref{Laplace_CB}, the Laplace transform (in the variable $x$) of the continuous binomial
coefficient
\[
f_{s}\left(x\right)=\left\{ \begin{array}{c}
x+s\\
s
\end{array}\right\} 
\]
is
\[
F_{s}\left(p\right)=\int_{0}^{+\infty}e^{-xp}\left\{ \begin{array}{c}
x+s\\
s
\end{array}\right\} dx=e^{\frac{s}{p}}\left(\frac{p+1}{p}\right)^{2}-1.
\]
We deduce
\begin{align*}
F_{s_{1}}\left(p\right)F_{s_{2}}\left(p\right) & =\left(e^{\frac{s_{1}}{p}}\left(\frac{p+1}{p}\right)^{2}-1\right)\left(e^{\frac{s_{2}}{p}}\left(\frac{p+1}{p}\right)^{2}-1\right)\\
 & =e^{\frac{s_{1}+s_{2}}{p}}\left(\frac{p+1}{p}\right)^{4}-1-\left(e^{\frac{s_{1}}{p}}\left(\frac{p+1}{p}\right)^{2}-1\right)-\left(e^{\frac{s_{2}}{p}}\left(\frac{p+1}{p}\right)^{2}-1\right).
\end{align*}
The decomposition
\[
e^{\frac{s_{1}+s_{2}}{p}}\left(\frac{p+1}{p}\right)^{4}-1=\left(e^{\frac{s_{1}+s_{2}}{2p}}\left(\frac{p+1}{p}\right)^{2}-1\right)\left(\left(e^{\frac{s_{1}+s_{2}}{2p}}\left(\frac{p+1}{p}\right)^{2}-1\right)+2\right)
\]
gives
\[
F_{s_{1}}\left(p\right)F_{s_{2}}\left(p\right)=F_{\frac{s_{1}+s_{2}}{2}}\left(p\right)\left(F_{\frac{s_{1}+s_{2}}{2}}\left(p\right)+2\right)-F_{s_{1}}\left(p\right)-F_{s_{2}}\left(p\right),
\]
the inverse Laplace transform of which is
\[
\left(f_{s_{1}}*f_{s_{2}}\right)\left(x\right)=\left(f_{\frac{s_{1}+s_{2}}{2}}*f_{\frac{s_{1}+s_{2}}{2}}\right)\left(x\right)+2f_{\frac{s_{1}+s_{2}}{2}}\left(x\right)-f_{s_{1}}\left(x\right)-f_{s_{2}}\left(x\right).
\]
Equivalently,
\begin{align}
\left\{ \begin{array}{c}
x+s_{1}\\
s_{1}
\end{array}\right\} *\left\{ \begin{array}{c}
x+s_{2}\\
s_{2}
\end{array}\right\} +\left\{ \begin{array}{c}
x+s_{1}\\
s_{1}
\end{array}\right\} +\left\{ \begin{array}{c}
x+s_{2}\\
s_{2}
\end{array}\right\}  & =\left\{ \begin{array}{c}
x+\frac{s_{1}+s_{2}}{2}\\
\frac{s_{1}+s_{2}}{2}
\end{array}\right\} *\left\{ \begin{array}{c}
x+\frac{s_{1}+s_{2}}{2}\\
\frac{s_{1}+s_{2}}{2}
\end{array}\right\} \label{eq:binomial convolution}\\
 & +2\left\{ \begin{array}{c}
x+\frac{s_{1}+s_{2}}{2}\\
\frac{s_{1}+s_{2}}{2}
\end{array}\right\} .\nonumber 
\end{align}
The theorem follows after rewriting this identity in terms of Dirac delta functions.
\end{proof}

%%%

We can then give an analogue of the Chu-Vandermonde identity, based on discrete difference and differential operators. We begin with the discrete case: define the $\star$ operator as 
\begin{equation}
\label{star}
\binom{n+k_{1}}{k_{1}} \star \binom{n+k_{2}}{k_{2}}=\sum_{m=1}^{n-1}\binom{m+k_{1}}{k_{1}}\binom{n-m+k_{2}}{k_{2}},
\end{equation}
so that the Chu-Vandermonde identity reads
\[
\binom{n+k_{1}}{k_{1}} \star \binom{n+k_{2}}{k_{2}}+\binom{n+k_{1}}{k_{1}}+\binom{n+k_{2}}{k_{2}}=\binom{n+k_{1}+k_{2}+1}{k_{1}+k_{2}+1}=\left(1+\Delta_{k_{1}+k_{2}}\right)\binom{n+k_{1}+k_{2}}{k_{1}+k_{2}},
\]
where $\Delta_k$ is the forward discrete difference operator in the variable $k,$
\[
\Delta_k f\left(n+k\right) = f\left(n+k+1\right).
\]
Its continuous analogue is as follows.
\begin{thm}
With $\bar{s}=s_{1}+s_{2},$  the continuous binomial coefficient 
%the convolution
%\[
%\left\{ \begin{array}{c}
%x+s_{1}\\
%s_{1}
%\end{array}\right\} *\left\{ \begin{array}{c}
%x+s_{2}\\
%s_{2}
%\end{array}\right\} =\int_{0}^{x}\left\{ \begin{array}{c}
%u+s_{1}\\
%s_{1}
%\end{array}\right\} \left\{ \begin{array}{c}
%x-u+s_{2}\\
%s_{2}
%\end{array}\right\} du
%\]
satisfies the differential equation
\[
\left\{ \begin{array}{c}
x+s_{1}\\
s_{1}
\end{array}\right\} *\left\{ \begin{array}{c}
x+s_{2}\\
s_{2}
\end{array}\right\} +\left\{ \begin{array}{c}
x+s_{1}\\
s_{1}
\end{array}\right\} +\left\{ \begin{array}{c}
x+s_{2}\\
s_{2}
\end{array}\right\} =\left(1+\frac{\partial}{\partial\bar{s}}\right)^{2}\left\{ \begin{array}{c}
x+\bar{s}\\
\bar{s}
\end{array}\right\} .
\]
\end{thm}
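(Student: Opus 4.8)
The plan is to reduce the statement to the convolution identity \eqref{eq:binomial convolution} already established, and then to interpret the operator $\left(1+\frac{\partial}{\partial\bar s}\right)^2$ through the Laplace transform in the variable $x$. Writing $f_s(x)=\cbinom{x+s}{s}$ as in the previous proof, with Laplace transform $F_s(p)=e^{s/p}\left(\frac{p+1}{p}\right)^2-1$ coming from \eqref{Laplace_CB}, identity \eqref{eq:binomial convolution} says exactly that the left-hand side of the theorem equals $f_{\bar s/2}*f_{\bar s/2}+2f_{\bar s/2}$. So it suffices to show that this combination coincides with $\left(1+\frac{\partial}{\partial\bar s}\right)^2 f_{\bar s}$, and I would do this by checking that the two sides have the same Laplace transform in $x$ and then inverting.

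First I would compute the Laplace transform of the left-hand side. Abbreviating $A=\left(\frac{p+1}{p}\right)^2$, we have $F_{\bar s/2}(p)=Ae^{\bar s/(2p)}-1$, so that $F_{\bar s/2}^2+2F_{\bar s/2}=A^2e^{\bar s/p}-1$ once the terms linear in $e^{\bar s/(2p)}$ cancel. Next I would handle the right-hand side. The essential observation is that $\partial/\partial\bar s$ commutes with the Laplace transform in $x$, and that on $F_{\bar s}=Ae^{\bar s/p}-1$ it acts by multiplying the exponential factor by $1/p$ while annihilating the constant. Hence $\left(1+\frac{\partial}{\partial\bar s}\right)$ sends $F_{\bar s}$ to $\frac{p+1}{p}\,Ae^{\bar s/p}-1$, and applying it a second time produces $\left(\frac{p+1}{p}\right)^2 Ae^{\bar s/p}-1=A^2e^{\bar s/p}-1$, using $A=\left(\frac{p+1}{p}\right)^2$. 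The two transforms agree, so inverting yields the claimed identity.

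The only real subtlety is justifying the interchange of $\partial/\partial\bar s$ with the defining Laplace integral, together with the uniqueness of the inverse transform; both are routine because $f_s$ is an explicit smooth function of $s$ (a combination of modified Bessel functions) with the decay needed for the transform to converge on a common half-plane. The heart of the argument is the algebraic coincidence that the operator $\left(1+\frac{\partial}{\partial\bar s}\right)^2$ corresponds in the Laplace domain to the multiplier $\left(\frac{p+1}{p}\right)^2$, which is precisely the extra factor $A$ needed to upgrade $F_{\bar s}$ into the squared form $A^2e^{\bar s/p}-1$ produced by the convolution on the left.
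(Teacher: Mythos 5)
Your proposal is correct and coincides in essence with the paper's second, ``more direct'' proof: both rest on the Laplace-domain observation that $\left(1+\frac{\partial}{\partial\bar{s}}\right)$ acts on $e^{\bar{s}/p}$ as multiplication by $\frac{p+1}{p}$, so that both sides of the identity have transform $\left(\frac{p+1}{p}\right)^{4}e^{\bar{s}/p}-1$. Your detour through \eqref{eq:binomial convolution} to rewrite the left-hand side as $f_{\bar{s}/2}*f_{\bar{s}/2}+2f_{\bar{s}/2}$ is an inessential variation of the paper's direct computation of $\left(F_{s_{1}}(p)+1\right)\left(F_{s_{2}}(p)+1\right)-1$.
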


\begin{proof}
%We start by computing the Laplace transform of this convolution, before manipulating these transforms and inverse Laplace transform them back to the original domain. We begin with the Laplace transforms
%\[
%\left\{ \begin{array}{c}
%x+s_{1}\\
%s_{1}
%\end{array}\right\} +\delta\left(x\right)\longrightarrow e^{\frac{s_{1}}{p}}\left(1+\frac{1}{p}\right)^{2},\thinspace\thinspace\left\{ \begin{array}{c}
%x+s_{2}\\
%s_{2}
%\end{array}\right\} +\delta\left(x\right)\longrightarrow e^{\frac{s_{2}}{p}}\left(1+\frac{1}{p}\right)^{2},
%\]
%so that
%\[
%\left(\left\{ \begin{array}{c}
%x+s_{1}\\
%s_{1}
%\end{array}\right\} +\delta\left(x\right)\right)*\left(\left\{ \begin{array}{c}
%x+s_{2}\\
%s_{2}
%\end{array}\right\} +\delta\left(x\right)\right)\longrightarrow e^{\frac{s_{1}+s_{2}}{p}}\left(1+\frac{1}{p}\right)^{4}.
%\]
Applying Thm. \ref{Thm11}, we have 
\[
\left(\left\{ \begin{array}{c}
x+s_{1}\\
s_{1}
\end{array}\right\} +\delta\left(x\right)\right)*\left(\left\{ \begin{array}{c}
x+s_{2}\\
s_{2}
\end{array}\right\} +\delta\left(x\right)\right)=\left(\left\{ \begin{array}{c}
x+\bar{s}\\
\bar{s}
\end{array}\right\} +\delta\left(x\right)\right)*\left(\left\{ \begin{array}{c}
x\\
0
\end{array}\right\} +\delta\left(x\right)\right),
\]
where
\[
\left\{ \begin{array}{c}
x\\
0
\end{array}\right\} =x+2
\]
is deduced from  the Laplace transform
\[
\mathcal{L} \left( \left\{ \begin{array}{c}
x\\
0
\end{array}\right\} +\delta\left(x\right) \right)
=\left(1+\frac{1}{p}\right)^{2}
=\mathcal{L}\left(2+x+\delta\left(x\right)\right).
\]
We thus need to compute the convolution
\[
\left\{ \begin{array}{c}
x+\bar{s}\\
\bar{s}
\end{array}\right\} *\left(2+x\right)=\int_{0}^{x}\left(2+x-u\right)\left\{ \begin{array}{c}
u+\bar{s}\\
u
\end{array}\right\} du.
\]
First, using the differential equation
\[
\frac{\partial}{\partial u}\frac{\partial}{\partial\bar{s}}\left\{ \begin{array}{c}
u+\bar{s}\\
u
\end{array}\right\} =\left\{ \begin{array}{c}
u+\bar{s}\\
u
\end{array}\right\} ,
\]
we deduce
\begin{align*}
\int_{0}^{x}\left\{ \begin{array}{c}
u+\bar{s}\\
u
\end{array}\right\} du & =\int_{0}^{x}\frac{\partial}{\partial u}\frac{\partial}{\partial\bar{s}}\left\{ \begin{array}{c}
u+\bar{s}\\
u
\end{array}\right\} du=\frac{\partial}{\partial\bar{s}}\int_{0}^{x}\frac{\partial}{\partial u}\left\{ \begin{array}{c}
u+\bar{s}\\
u
\end{array}\right\} du\\
 & =\frac{\partial}{\partial\bar{s}}\left(\left\{ \begin{array}{c}
x+\bar{s}\\
x
\end{array}\right\} -\left\{ \begin{array}{c}
\bar{s}\\
\bar{s}
\end{array}\right\} \right)=\frac{\partial}{\partial\bar{s}}\left\{ \begin{array}{c}
x+\bar{s}\\
x
\end{array}\right\} -1.
\end{align*}
This argument also shows that an antiderivative of $\left\{ \begin{array}{c}
u+\bar{s}\\
u
\end{array}\right\} $ is $\frac{\partial}{\partial\bar{s}}\left\{ \begin{array}{c}
x+\bar{s}\\
x
\end{array}\right\} .$ Next, integrating by parts gives
\begin{align*}
\int_{0}^{x}u\left\{ \begin{array}{c}
u+\bar{s}\\
u
\end{array}\right\} du & =\left[u\frac{\partial}{\partial\bar{s}}\left\{ \begin{array}{c}
x+\bar{s}\\
x
\end{array}\right\} \right]_{0}^{x}-\int_{0}^{x}\frac{\partial}{\partial\bar{s}}\left\{ \begin{array}{c}
x+\bar{s}\\
x
\end{array}\right\} du\\
 & =x\frac{\partial}{\partial\bar{s}}\left\{ \begin{array}{c}
x+\bar{s}\\
x
\end{array}\right\} -\frac{\partial^{2}}{\partial\bar{s}^{2}}\left\{ \begin{array}{c}
x+\bar{s}\\
x
\end{array}\right\} .
\end{align*}
We deduce
\begin{align*}
\int_{0}^{x}\left(2+x-u\right)\left\{ \begin{array}{c}
u+\bar{s}\\
u
\end{array}\right\} du & =\left(2+x\right)\left(\frac{\partial}{\partial\bar{s}}\left\{ \begin{array}{c}
x+\bar{s}\\
x
\end{array}\right\} -1\right)-x\frac{\partial}{\partial\bar{s}}\left\{ \begin{array}{c}
x+\bar{s}\\
x
\end{array}\right\} +\frac{\partial^{2}}{\partial\bar{s}^{2}}\left\{ \begin{array}{c}
x+\bar{s}\\
x
\end{array}\right\} \\
 & =-\left(x+2\right)+2\frac{\partial}{\partial\bar{s}}\left\{ \begin{array}{c}
x+\bar{s}\\
x
\end{array}\right\} +\frac{\partial^{2}}{\partial\bar{s}^{2}}\left\{ \begin{array}{c}
x+\bar{s}\\
x
\end{array}\right\} .
\end{align*}
Finally, we deduce the convolution
\begin{align*}
 & \left(\left\{ \begin{array}{c}
x+s_{1}\\
s_{1}
\end{array}\right\} +\delta\left(x\right)\right)*\left(\left\{ \begin{array}{c}
x+s_{2}\\
s_{2}
\end{array}\right\} +\delta\left(x\right)\right)\\
 & =\left(\left\{ \begin{array}{c}
x+\bar{s}\\
\bar{s}
\end{array}\right\} +\delta\left(x\right)\right)*\left(\left\{ \begin{array}{c}
x\\
0
\end{array}\right\} +\delta\left(x\right)\right)\\
 & =\left\{ \begin{array}{c}
x+\bar{s}\\
\bar{s}
\end{array}\right\} *\left\{ \begin{array}{c}
x\\
0
\end{array}\right\} +\left\{ \begin{array}{c}
x+\bar{s}\\
\bar{s}
\end{array}\right\} +\left\{ \begin{array}{c}
x\\
0
\end{array}\right\} +\delta\left(x\right)\\
 & =-\left(x+2\right)+2\frac{\partial}{\partial\bar{s}}\left\{ \begin{array}{c}
x+\bar{s}\\
x
\end{array}\right\} +\frac{\partial^{2}}{\partial\bar{s}^{2}}\left\{ \begin{array}{c}
x+\bar{s}\\
x
\end{array}\right\} +\left\{ \begin{array}{c}
x+\bar{s}\\
\bar{s}
\end{array}\right\} +\left\{ \begin{array}{c}
x\\
0
\end{array}\right\} +\delta\left(x\right)\\
 & =2\frac{\partial}{\partial\bar{s}}\left\{ \begin{array}{c}
x+\bar{s}\\
x
\end{array}\right\} +\frac{\partial^{2}}{\partial\bar{s}^{2}}\left\{ \begin{array}{c}
x+\bar{s}\\
x
\end{array}\right\} +\left\{ \begin{array}{c}
x+\bar{s}\\
\bar{s}
\end{array}\right\} +\delta\left(x\right)\\
 & =\left(1+\frac{\partial}{\partial\bar{s}}\right)^{2}\left\{ \begin{array}{c}
x+\bar{s}\\
\bar{s}
\end{array}\right\} +\delta\left(x\right).
\end{align*}
A more direct proof involves converting every term into the Laplace transform domain: start with
\[
\mathcal{L}\left(
\left(\left\{ \begin{array}{c}
x+s_{1}\\
s_{1}
\end{array}\right\} +\delta\left(x\right)\right)*\left(\left\{ \begin{array}{c}
x+s_{2}\\
s_{2}
\end{array}\right\} +\delta\left(x\right)\right) \right)= e^{\frac{s_{1}+s_{2}}{p}}\left(1+\frac{1}{p}\right)^{4}=e^{\frac{\bar{s}}{p}}\left(1+\frac{1}{p}\right)^{4}
\]
and
\[
\mathcal{L}\left(
\left\{ \begin{array}{c}
x+\bar{s}\\
\bar{s}
\end{array}\right\} +\delta\left(x\right)\right)= e^{\frac{\bar{s}}{p}}\left(1+\frac{1}{p}\right)^{2}.
\]
Since
\[
\frac{\partial}{\partial\bar{s}}e^{\frac{\bar{s}}{p}}=\frac{1}{p}e^{\frac{\bar{s}}{p}},
\]
it follows that
\[
\left(1+\frac{\partial}{\partial\bar{s}}\right)^{2}\left(\left\{ \begin{array}{c}
x+\bar{s}\\
\bar{s}
\end{array}\right\} +\delta\left(x\right)\right)=\left(1+\frac{\partial}{\partial\bar{s}}\right)^{2}\left\{ \begin{array}{c}
x+\bar{s}\\
\bar{s}
\end{array}\right\} +\delta\left(x\right)
\]
has Laplace transform
\[
e^{\frac{\bar{s}}{p}}\left(1+\frac{1}{p}\right)^{2}\left(1+\frac{1}{p}\right)^{2}=e^{\frac{\bar{s}}{p}}\left(1+\frac{1}{p}\right)^{4}.
\]
\end{proof}
This second proof also allows us to state the following generalization
\begin{cor}
With $\bar{s}=\sum_{i=1}^{p}s_{i},$ we have
\[
\left(\left\{ \begin{array}{c}
x+s_{1}\\
s_{1}
\end{array}\right\} +\delta\left(x\right)\right)*\dots*\left(\left\{ \begin{array}{c}
x+s_{p}\\
s_{p}
\end{array}\right\} +\delta\left(x\right)\right)=\left(1+\frac{\partial}{\partial\bar{s}}\right)^{2p}\left\{ \begin{array}{c}
x+\bar{s}\\
\bar{s}
\end{array}\right\} +\delta\left(x\right).
\]
\end{cor}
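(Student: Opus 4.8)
The plan is to establish the identity by induction on the number of factors, carrying out the whole argument in the Laplace domain in the variable $x$ exactly as in the ``more direct proof'' of the two-factor case above. Since the letter $p$ already denotes the number of factors, I will write $q$ for the Laplace variable. The heuristic that motivates the operator power $2p$ in the statement is transparent: convolution turns into a product, each of the $p$ factors $\cbinom{x+s_i}{s_i}+\delta(x)$ contributes one copy of $\left(1+\frac{1}{q}\right)^2$, and since $\frac{\partial}{\partial\bar{s}}$ acts on $e^{\bar{s}/q}$ as multiplication by $\frac{1}{q}$, each such factor should correspond on the right to one application of $\left(1+\frac{\partial}{\partial\bar{s}}\right)^2$. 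The task is to turn this count into a rigorous identity.

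Concretely, I would proceed as follows. First, from \eqref{Laplace_CB} the single continuous binomial coefficient $f_s(x)=\cbinom{x+s}{s}$ has Laplace transform $e^{s/q}\left(1+\frac{1}{q}\right)^2-1$, so that the shifted object has the clean transform $\mathcal{L}\!\left(f_s+\delta\right)=e^{s/q}\left(1+\frac{1}{q}\right)^2$; the entire purpose of the added $\delta(x)$ is to cancel the constant $-1$ and leave a pure multiple of $e^{s/q}$. Second, convolution in $x$ becomes multiplication, so the left-hand side has transform
\[
\prod_{i=1}^{p}e^{s_i/q}\left(1+\frac{1}{q}\right)^2=e^{\bar{s}/q}\left(1+\frac{1}{q}\right)^{2p},\qquad \bar{s}=\sum_{i=1}^{p}s_i.
\]
Third, because $\frac{\partial}{\partial\bar{s}}e^{\bar{s}/q}=\frac{1}{q}e^{\bar{s}/q}$ while $\frac{\partial}{\partial\bar{s}}\delta(x)=0$ and $\frac{\partial}{\partial\bar{s}}$ annihilates additive constants, the operator $\left(1+\frac{\partial}{\partial\bar{s}}\right)^2$ corresponds on the right precisely to multiplication of the transform by $\left(1+\frac{1}{q}\right)^2$ and leaves the delta invariant. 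This yields the inductive step at once: convolving the $p$-fold product with one further factor $f_{s_{p+1}}+\delta$ multiplies the transform by $\left(1+\frac{1}{q}\right)^2$, i.e.\ applies one more $\left(1+\frac{\partial}{\partial\bar{s}}\right)^2$ on the right. The two-factor Theorem proved just above furnishes the base case, and inverting the Laplace transform returns the stated convolution identity.

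The step I expect to be the main obstacle -- and the only genuinely delicate one, since the recurrence ``one more factor $\leftrightarrow$ one more $\left(1+\frac{\partial}{\partial\bar{s}}\right)^2$'' is immediate -- is the exact bookkeeping that pins the operator exponent in the statement to the total power $\left(1+\frac{1}{q}\right)^{2p}$ produced by the convolution. The subtlety is that the right-hand side is written as $\left(1+\frac{\partial}{\partial\bar{s}}\right)^{2p}\cbinom{x+\bar{s}}{\bar{s}}+\delta(x)$ with the delta \emph{outside} the operator, so one must verify that $\left(1+\frac{\partial}{\partial\bar{s}}\right)^{2p}$ returns the constant $-1$ in $\mathcal{L}\!\left(\cbinom{x+\bar{s}}{\bar{s}}\right)$ to itself, that this constant is then exactly cancelled by the external $\delta(x)$, and that the $e^{\bar{s}/q}$-part carries the right amplification. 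Carrying this accounting through faithfully -- rather than the trivial recurrence -- is what fixes the operator power appearing in the statement, and it is where I would concentrate the rigorous work.
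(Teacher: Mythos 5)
Your route --- work entirely in the Laplace domain, use $\mathcal{L}\left(\cbinom{x+s}{s}+\delta(x)\right)=e^{s/q}\left(1+\frac{1}{q}\right)^{2}$, and induct on the number of factors --- is exactly the argument the paper has in mind: the corollary is stated as an immediate consequence of the ``more direct proof'' of the two-factor theorem, with no separate proof given. But the one step you explicitly postpone, the ``exact bookkeeping that pins the operator exponent,'' is precisely where the proposal breaks, and carrying it out overturns the claimed exponent. The left-hand side has transform $e^{\bar{s}/q}\left(1+\frac{1}{q}\right)^{2p}$, as you say. On the right, however, $\mathcal{L}\left(\cbinom{x+\bar{s}}{\bar{s}}\right)=e^{\bar{s}/q}\left(1+\frac{1}{q}\right)^{2}-1$ \emph{already carries one factor} $\left(1+\frac{1}{q}\right)^{2}$, so
\[
\mathcal{L}\left(\left(1+\frac{\partial}{\partial\bar{s}}\right)^{2p}\cbinom{x+\bar{s}}{\bar{s}}+\delta\left(x\right)\right)=e^{\bar{s}/q}\left(1+\frac{1}{q}\right)^{2p+2}-1+1=e^{\bar{s}/q}\left(1+\frac{1}{q}\right)^{2p+2},
\]
which exceeds the left-hand side by a factor $\left(1+\frac{1}{q}\right)^{2}$. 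Your guiding heuristic ``each of the $p$ factors corresponds to one application of $\left(1+\frac{\partial}{\partial\bar{s}}\right)^{2}$'' double-counts: the base object $\cbinom{x+\bar{s}}{\bar{s}}+\delta(x)$ already accounts for one of the $p$ factors, so only $p-1$ operator applications remain.

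Indeed your own induction says so: the two-factor theorem (your base case) has exponent $2$, and each additional convolution factor contributes one more $\left(1+\frac{\partial}{\partial\bar{s}}\right)^{2}$, giving exponent $2+2\left(p-2\right)=2\left(p-1\right)$ for $p$ factors, not $2p$. Your writeup never reconciles this with the stated exponent; it asserts that inverting the transform ``returns the stated convolution identity,'' which it does not. What your argument, completed, actually proves is
\[
\left(\cbinom{x+s_{1}}{s_{1}}+\delta\left(x\right)\right)*\dots*\left(\cbinom{x+s_{p}}{s_{p}}+\delta\left(x\right)\right)=\left(1+\frac{\partial}{\partial\bar{s}}\right)^{2\left(p-1\right)}\cbinom{x+\bar{s}}{\bar{s}}+\delta\left(x\right),
\]
and the transform check above shows the version with $2p$ is false as a distributional identity --- already at $p=2$ it contradicts the very theorem it is supposed to generalize. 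So the exponent $2p$ in the printed corollary appears to be an error (it should read $2(p-1)$), and the honest conclusion of your proof should have been the corrected identity. As submitted, the proposal has a genuine gap at exactly the step you flagged but did not execute: had you done the accounting, you would have found that the statement cannot hold as written.
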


\subsection{Central binomial coefficients}

The central binomial coefficients $\left\{ \begin{array}{c}
2s\\
s
\end{array}\right\} $ have explicit expression
\[
\left\{ \begin{array}{c}
2s\\
s
\end{array}\right\} =2I_{0}\left(2s\right)+2I_{1}\left(2s\right)=\left(2+\frac{d}{ds}\right)I_{0}\left(2s\right),
\]
and Laplace transform
\[
\mathcal{L}\left(
\left\{ \begin{array}{c}
2s\\
s
\end{array}\right\} +\delta\left(s\right)\right) =\sqrt{\frac{p+2}{p-2}}.
\]

The parallel with the usual central binomial coefficients already appears in the asymptotic behavior: as it is well known, for large $n,$
\[
\frac{1}{2^{2n}}\binom{2n}{n} \sim \frac{1}{\sqrt{\pi n}} 
\]
whereas elementary asymptotic behavior results on Bessel I functions give, for large $s,$
\[
\frac{1}{2}e^{-4s} \left\{ 
\begin{array}{c}
2s\\
s
\end{array}\right\}
\sim \frac{1}{\sqrt{\pi s}}.
\]
The next theorem gives the continuous analogue of the convolution identity for central binomial coefficients
\[
\sum_{k=0}^{n}\binom{2k}{k}\binom{2n-2k}{n-k}=4^{n},
\]
that can be deduced from the Taylor series
\[
\sum_{n\ge0}\binom{2n}{n}z^{n}=\frac{1}{\sqrt{1-4z}}.
\]
To make this analogue clearer, let us first rewrite this identity in terms of the $\star$ operator \eqref{star} as
\[
\binom{2n}{n} \star \binom{2n}{n}=4^{n} - 2 \binom{2n}{n}.
\]

\begin{thm}
The convolution of continuous central binomial coefficients is given by
\begin{equation}
\left(\left\{ \begin{array}{c}
2s\\
s
\end{array}\right\} +\delta\left(s\right)\right)*\left(\left\{ \begin{array}{c}
2s\\
s
\end{array}\right\} +\delta\left(s\right)\right)=4e^{2s}+\delta\left(s\right)\label{eq:conv central},
\end{equation}
or equivalently by
\[
\left\{ \begin{array}{c}
2s\\
s
\end{array}\right\} *\left\{ \begin{array}{c}
2s\\
s
\end{array}\right\} =4e^{2s}-2\left\{ \begin{array}{c}
2s\\
s
\end{array}\right\} .
%=4\left[e^{2s}-I_{0}\left(2s\right)-I_{1}\left(2s\right)\right]
\]
This can be generalized to any $2n-$tuple convolution as 
\[
\left(\left\{ \begin{array}{c}
2s\\
s
\end{array}\right\} +\delta\left(s\right)\right)^{*2n}=4e^{2s}L_{n-1}^{\left(1\right)}\left(-4s\right)+\delta\left(s\right)
\]
where $L_{n}^{\left(k\right)}\left(x\right)$ is the  associated Laguerre polynomial
with Rodrigues formula
\[
L_{n}^{\left(k\right)}\left(x\right)=\frac{e^{x}x^{-k}}{n!}\frac{d^{n}}{dx^{n}}\left(e^{-x}x^{n+k}\right).
\]

\end{thm}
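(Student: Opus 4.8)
The plan is to work entirely in the Laplace domain, using the convolution theorem together with the transform $\mathcal{L}\!\left(\cbinom{2s}{s}+\delta(s)\right)=\sqrt{\frac{p+2}{p-2}}$ recorded above. Since the two-fold identity \eqref{eq:conv central} is the case $n=1$ of the general $2n$-fold statement, I would prove the general formula first and then specialize.

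For the $2n$-fold convolution, the convolution theorem gives at once
\[
\mathcal{L}\!\left(\left(\cbinom{2s}{s}+\delta(s)\right)^{*2n}\right)=\left(\sqrt{\frac{p+2}{p-2}}\right)^{2n}=\left(\frac{p+2}{p-2}\right)^{n}.
\]
Because $\mathcal{L}(\delta(s))=1$, it then suffices to verify the single transform identity
\[
\mathcal{L}\!\left(4e^{2s}L_{n-1}^{(1)}(-4s)\right)(p)=\left(\frac{p+2}{p-2}\right)^{n}-1,
\]
after which the claimed inverse Laplace transform $4e^{2s}L_{n-1}^{(1)}(-4s)+\delta(s)$ follows by uniqueness.

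To establish this transform I would use the generating function $\sum_{m\ge0}L_{m}^{(1)}(x)\,t^{m}=(1-t)^{-2}\exp\!\left(\frac{-xt}{1-t}\right)$. Setting $x=-4s$, multiplying by $e^{-qs}$, and integrating over $s\in[0,\infty)$ collapses the inner exponential integral to $\left(q-\frac{4t}{1-t}\right)^{-1}$, yielding the closed form
\[
\sum_{m\ge0}t^{m}\int_{0}^{\infty}e^{-qs}L_{m}^{(1)}(-4s)\,ds=\frac{1}{(1-t)\bigl(q-(q+4)t\bigr)}.
\]
A partial-fraction decomposition in $t$ (poles at $t=1$ and $t=q/(q+4)$) extracts the coefficient of $t^{m}$ as $\frac14\bigl[((q+4)/q)^{m+1}-1\bigr]$. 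Substituting $q=p-2$ and $m=n-1$, where the shift $q=p-2$ encodes the factor $e^{2s}$, and multiplying by $4$ reproduces exactly $\left(\frac{p+2}{p-2}\right)^{n}-1$. Specializing to $n=1$ uses $L_{0}^{(1)}\equiv1$ to recover \eqref{eq:conv central}; the equivalent form then follows by expanding the left-hand convolution bilinearly and using $f*\delta=f$, $\delta*\delta=\delta$, which gives $\cbinom{2s}{s}*\cbinom{2s}{s}+2\cbinom{2s}{s}+\delta=4e^{2s}+\delta$.

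The main obstacle is the middle step: computing the Laplace transform of the scaled Laguerre polynomial $L_{n-1}^{(1)}(-4s)$ and matching it to the power $\left(\frac{p+2}{p-2}\right)^{n}$. The naive scaling rule $\mathcal{L}(f(cs))(p)=c^{-1}\mathcal{L}(f)(p/c)$ is unavailable here since $c=-4<0$ reverses the integration limits; treating $L_{n-1}^{(1)}(-4s)$ honestly as a polynomial in $s$ via the generating function is what makes the computation go through cleanly.
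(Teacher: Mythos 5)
Your proposal is correct, and it shares the paper's Laplace-domain frame---both start from $\mathcal{L}\left(\cbinom{2s}{s}+\delta(s)\right)=\sqrt{\frac{p+2}{p-2}}$ and reduce the $2n$-fold convolution to inverting $\left(\frac{p+2}{p-2}\right)^{n}$---but the key middle step is genuinely different. The paper inverts directly: it expands $\left(\frac{p+2}{p-2}\right)^{n}=\left(1+\frac{4}{p-2}\right)^{n}=\sum_{m=0}^{n}\binom{n}{m}\left(\frac{4}{p-2}\right)^{m}$, applies $\mathcal{L}^{-1}\left[(p-2)^{-m}\right]=e^{2s}s^{m-1}/(m-1)!$ term by term, and then recognizes the resulting finite sum $e^{2s}\sum_{m=0}^{n-1}\binom{n}{m+1}4^{m+1}\frac{s^{m}}{m!}$ as $4e^{2s}L_{n-1}^{(1)}(-4s)$ from the explicit coefficient formula for the associated Laguerre polynomials. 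You go in the forward direction instead, verifying $\mathcal{L}\left(4e^{2s}L_{n-1}^{(1)}(-4s)\right)=\left(\frac{p+2}{p-2}\right)^{n}-1$ for all $n$ simultaneously via the generating function $\sum_{m\ge0}L_{m}^{(1)}(x)t^{m}=(1-t)^{-2}e^{-xt/(1-t)}$ and a partial-fraction extraction of the coefficient of $t^{m}$; your computation checks out (with $q=p-2$ the coefficient is indeed $\frac{1}{4}\left[\left(\frac{q+4}{q}\right)^{m+1}-1\right]$, which at $m=n-1$ gives exactly the target after multiplying by $4$). The paper's route is more elementary---only the binomial theorem and one table inverse---while yours buys a uniform treatment of all $n$ and avoids having to spot the coefficient sum as a Laguerre polynomial; the small price is that the sum--integral interchange deserves a word of justification (Tonelli suffices: for $s\ge0$ and small $t>0$ every term $L_{m}^{(1)}(-4s)\,t^{m}$ is nonnegative since $L_{m}^{(1)}(-x)$ has positive coefficients, and the resulting identity for $q$ large extends to $\mathrm{Re}\,p>2$ by analyticity), together with the uniqueness argument you already invoke to pass back from transforms to distributions. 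Your $n=1$ specialization via $L_{0}^{(1)}\equiv1$ and the bilinear expansion $(f+\delta)*(f+\delta)=f*f+2f+\delta$ yielding the equivalent form match the statement exactly.
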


\begin{proof}
Expanding
\[
\left(\sqrt{\frac{p+2}{p-2}}\right)^{2n}=\left(1+\frac{4}{p-2}\right)^{n}=\sum_{m=0}^{n}\binom{n}{m}\left(\frac{4}{p-2}\right)^{m}
\]
produces the inverse Laplace transform
\begin{align*}
\sum_{m=1}^{n}\binom{n}{m}\frac{2^{2m-3}e^{2s}s^{m-1}}{3\left(m-1\right)!}+\delta\left(s\right) & =\delta\left(s\right)+e^{2s}\sum_{m=0}^{n-1}\binom{n}{m+1}4.2^{2m}\frac{s^{p}}{p!}\\
 & =\delta\left(s\right)+4e^{2s}L_{n-1}^{\left(1\right)}\left(-4s\right).
\end{align*}
\end{proof}

\section{The continuous binomial distribution}\label{section4}

Following Cano and D\'{i}az \cite{Diaz2}, the continuous binomial coefficients allow to define a continuous version of the discrete binomial distribution through the probability density function
\begin{equation}\label{eq:ContDef}
f_{x,p}\left(s\right)=\frac{1}{A_{x,p}}\cbinom{x}{s} p^{s}\left(1-p\right)^{x-s},\thinspace\thinspace0\le s\le x
\end{equation}
where $0\le p\le1$ and the normalization constant $A_{x,p}$ is such
that
\[
\int_{0}^{x}f_{x,p}\left(s\right) ds=1.
\]

Notice that the centered version of this distribution, namely the distribution of the shifted random variable
\[
Y=X-\frac{x}{2},
\]
where $X$ is distributed as in (\ref{eq:ContDef}), is studied in \cite{Diaz2}. Its density is
\begin{equation}
\frac{1}{A_{x,p}}\cbinom{x}{\frac{x}{2}+s}p^{s+\frac{x}{2}}\left(1-p\right)^{\frac{x}{2}-s},\thinspace\thinspace-\frac{x}{2}\le s\le\frac{x}{2}.\label{eq:centered binomial}
\end{equation}

The normalization constant $A_{x,p}$ of this density is not evaluated in \cite{Diaz2}; we give its value
as follows.
\begin{thm}
\label{normalization}
The normalization constant of the continuous binomial distribution
is equal to 
\begin{align*}
A_{x,p} & =2\left[p\left(1-p\right)\right]^{\frac{x}{2}}\left\{ \cosh\left(\frac{x}{2}\sqrt{4+\log^{2}\frac{p}{1-p}}\right)-\cosh\left(\frac{x}{2}\log\frac{p}{1-p}\right)\right.\\
 & \left.+\frac{2}{\sqrt{4+\log^{2}\frac{p}{1-p}}}\sinh\left(\frac{x}{2}\sqrt{4+\log^{2}\frac{p}{1-p}}\right)\right\}.
\end{align*}
\end{thm}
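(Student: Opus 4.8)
The plan is to recognize $A_{x,p}$ as a direct specialization of the integral already evaluated in the Corollary giving \eqref{eq:alpha^se^us}. By definition of the normalization constant,
\[
A_{x,p}=\int_{0}^{x}\cbinom{x}{s}p^{s}\left(1-p\right)^{x-s}ds,
\]
so the first step is to massage the weight $p^{s}(1-p)^{x-s}$ into the shape $\alpha^{s}e^{us}$ handled by the corollary. I would factor $p^{s}(1-p)^{x-s}=(1-p)^{x}\bigl(\tfrac{p}{1-p}\bigr)^{s}$ and pull the $s$-independent constant $(1-p)^{x}$ outside the integral, which rewrites $A_{x,p}=(1-p)^{x}\int_{0}^{x}\cbinom{x}{s}\alpha^{s}ds$ with $\alpha=p/(1-p)$. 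This is precisely the corollary's integrand with the choice $u=0$.

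Next I would invoke \eqref{eq:alpha^se^us} with $\alpha=p/(1-p)$ and $u=0$. Under this substitution $u+\log\alpha=\log\tfrac{p}{1-p}$, so the three hyperbolic terms in \eqref{eq:alpha^se^us} collapse exactly to the bracketed combination claimed in the statement, with the argument $\sqrt{4+\log^{2}\tfrac{p}{1-p}}$ appearing in the $\cosh$ and $\sinh$ terms and $\tfrac{x}{2}\log\tfrac{p}{1-p}$ in the subtracted $\cosh$. The corollary also supplies the prefactor $2\alpha^{x/2}e^{ux/2}=2\bigl(\tfrac{p}{1-p}\bigr)^{x/2}$.

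The final step is purely algebraic bookkeeping of the prefactors: combining the external $(1-p)^{x}$ with the corollary's $\bigl(\tfrac{p}{1-p}\bigr)^{x/2}$ gives $(1-p)^{x}\cdot\tfrac{p^{x/2}}{(1-p)^{x/2}}=[p(1-p)]^{x/2}$, so that $A_{x,p}=2[p(1-p)]^{x/2}\{\cdots\}$ matches the asserted closed form verbatim. There is no genuine obstacle here, since the analytic content has already been absorbed into \eqref{eq:alpha^se^us}; the only point requiring care is confirming that the power simplification and the reduction $u+\log\alpha=\log\tfrac{p}{1-p}$ align the hyperbolic arguments with those claimed, which I would verify explicitly before concluding.
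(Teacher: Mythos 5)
Your proposal is correct and follows exactly the paper's own proof: factor the weight as $(1-p)^{x}\left(\frac{p}{1-p}\right)^{s}$ and apply the corollary giving \eqref{eq:alpha^se^us} with $\alpha=\frac{p}{1-p}$ and $u=0$. The additional prefactor bookkeeping you spell out is precisely the step the paper leaves implicit, and it checks out.
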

\begin{proof}
Since	
\[
A_{x,p}=\int_{0}^{x}\cbinom{x}{s} p^{s}\left(1-p\right)^{x-s}ds=\left(1-p\right)^{x}\int_{0}^{x}\cbinom{x}{s} \left(\frac{p}{1-p}\right)^{s}ds,
\]
using (\ref{eq:alpha^se^us}) with $\alpha=\frac{p}{1-p}$ and $u=0$
yields the result.
\end{proof}

Moreover, the moment generating function of the continuous binomial distribution \eqref{eq:ContDef} can  be computed explicitly as
follows.
\begin{thm}\label{MomentGen}
The moment generating function of the continuous binomial distribution \eqref{eq:ContDef} is
\begin{equation}
\mathbb{E}e^{uX}=e^{\frac{ux}{2}}\frac{\varphi_{x,p}\left(u\right)}{\varphi_{x,p}\left(0\right)}\label{eq:moment generating X2},
\end{equation}
with
\begin{align*}
\varphi_{x,p}\left(u\right) & =\cosh\left(\frac{x}{2}\sqrt{4+\left(u+\log\frac{p}{1-p}\right)^{2}}\right)-\cosh\left(\frac{x}{2}\log\frac{p}{1-p}\right)\\
 & +\frac{2}{\sqrt{4+\left(u+\log\frac{p}{1-p}\right)^{2}}}\sinh\left(\frac{x}{2}\sqrt{4+\left(u+\log\frac{p}{1-p}\right)^{2}}\right).
\end{align*}
\end{thm}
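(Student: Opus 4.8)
The plan is to compute the moment generating function straight from its definition and to reduce the entire calculation to the single closed-form integral \eqref{eq:alpha^se^us}, obtained as a corollary of Theorem \ref{laplace}. Expanding the expectation against the density \eqref{eq:ContDef} gives
\[
\mathbb{E}e^{uX}=\int_{0}^{x}e^{us}f_{x,p}(s)\,ds=\frac{1}{A_{x,p}}\int_{0}^{x}\cbinom{x}{s}p^{s}(1-p)^{x-s}e^{us}\,ds,
\]
and the first move is to fold the exponential weight into the base by writing $p^{s}(1-p)^{x-s}=(1-p)^{x}\left(\frac{p}{1-p}\right)^{s}$. This presents the numerator as $(1-p)^{x}$ times an integral of exactly the form $\int_{0}^{x}\cbinom{x}{s}\alpha^{s}e^{us}\,ds$ with $\alpha=\frac{p}{1-p}$.

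Next I would apply \eqref{eq:alpha^se^us} with this choice of $\alpha$, for which $\log\alpha=\log\frac{p}{1-p}$ and hence $u+\log\alpha=u+\log\frac{p}{1-p}$ is precisely the argument appearing throughout the statement. This evaluates the numerator to $(1-p)^{x}\cdot2\left(\frac{p}{1-p}\right)^{x/2}e^{ux/2}$ multiplied by the bracketed combination of hyperbolic terms, which is $\varphi_{x,p}(u)$. For the denominator I would observe that setting $u=0$ in the same integral is exactly the evaluation of $A_{x,p}$ recorded in Theorem \ref{normalization}; equivalently, $A_{x,p}=(1-p)^{x}\cdot2\left(\frac{p}{1-p}\right)^{x/2}\varphi_{x,p}(0)$.

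Forming the quotient, the base-change prefactor $(1-p)^{x}\cdot2\left(\frac{p}{1-p}\right)^{x/2}=2[p(1-p)]^{x/2}$ is shared by numerator and denominator and cancels, leaving $\mathbb{E}e^{uX}=e^{ux/2}\,\varphi_{x,p}(u)/\varphi_{x,p}(0)$, as claimed. There is no real analytic obstacle, since the difficult Laplace inversion is already discharged in \eqref{eq:alpha^se^us}; the work is purely algebraic bookkeeping. The one place that demands genuine care is verifying that the factor $(1-p)^{x}$ extracted from the density and the factor $\left(\frac{p}{1-p}\right)^{x/2}$ produced by \eqref{eq:alpha^se^us} recombine into the symmetric $[p(1-p)]^{x/2}$, so that only $e^{ux/2}$ and the ratio survive; a secondary point is to track the argument $u+\log\frac{p}{1-p}$ consistently through the middle $\cosh$ term, where it is easiest for a stray $u$ to be dropped.
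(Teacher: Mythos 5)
Your proposal is correct and is precisely the argument the paper intends: the paper states Theorem \ref{MomentGen} without an explicit proof, but its proof of Theorem \ref{normalization} is exactly your computation at $u=0$, and the general case follows from \eqref{eq:alpha^se^us} with $\alpha=\frac{p}{1-p}$ just as you describe, with the prefactor $(1-p)^{x}\cdot 2\left(\frac{p}{1-p}\right)^{x/2}=2\left[p(1-p)\right]^{x/2}$ cancelling between numerator and denominator.

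One substantive remark: the ``secondary point'' you flag at the end is in fact a discrepancy in the printed statement. Carrying \eqref{eq:alpha^se^us} through literally, the middle term of the bracket is $\cosh\left(\frac{x}{2}\left(u+\log\frac{p}{1-p}\right)\right)$, whereas the theorem as printed writes $\cosh\left(\frac{x}{2}\log\frac{p}{1-p}\right)$ with the $u$ dropped; your version is the correct one. This is corroborated by the remark immediately following the theorem, $\varphi_{x,p}(z)=\varphi_{x,\frac{1}{2}}\left(z+\log\frac{p}{1-p}\right)$, which holds only with the $u$-dependent middle term (the printed version would force $\cosh\left(\frac{x}{2}\log\frac{p}{1-p}\right)=1$ for all $p$). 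Since the two readings agree at $u=0$, the value $\varphi_{x,p}(0)$ and hence Theorem \ref{normalization} are unaffected, and your derivation goes through exactly as written.
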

We remark that $\varphi_{x,p}\left(z\right)=\varphi_{x,\frac{1}{2}}\left(z+\log\frac{p}{1-p}\right)$.

%%%%%%%%%%%%%%%%%%%%% added moments when p=1/2 %%%%%%%%%%%%%%%%%%%%%%%%%%

In the symmetric case $p=\frac{1}{2},$ the moments can be explicitly
computed, following  the approach used by S.M. Iacus and N. Yoshida  in the case of the telegraph process \cite{Iacus}.

\begin{thm}
The moments of a random variable $X$ distributed according to the symmetric discrete binomial distribution \eqref{eq:centered binomial} density with $p=\frac{1}{2}$ are
\[
\mathbb{E}X^k=
\begin{cases}
\frac{1}{\left(e^{x}-1\right)}\left[\left(\frac{x}{2}\right)^{\frac{k+1}{2}}\Gamma\left(\frac{k+1}{2}\right)\left(I_{\frac{k+1}{2}}\left(x\right)+I_{\frac{k-1}{2}}\left(x\right)\right)-\left(\frac{x}{2}\right)^{k}\right], & k \text{ even,}\\
0, & k \text{ odd.}
\end{cases}
\]
\end{thm}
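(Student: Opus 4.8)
The plan is to reduce the moment to a single integral against the continuous binomial coefficient and then evaluate it term by term using the Bessel series. Setting $p=\frac{1}{2}$ in the centered density \eqref{eq:centered binomial}, the weight $p^{s+x/2}(1-p)^{x/2-s}=(1/2)^{x}$ is constant, so the density is simply $\frac{1}{A_{x,1/2}}(1/2)^{x}\cbinom{x}{\frac{x}{2}+s}$ on $[-\frac{x}{2},\frac{x}{2}]$. By the corollary $\int_{0}^{x}\cbinom{x}{s}\,ds=2(e^{x}-1)$ one gets $A_{x,1/2}=2(e^{x}-1)(1/2)^{x}$, so that
\[
\mathbb{E}X^{k}=\frac{1}{2(e^{x}-1)}\int_{-x/2}^{x/2}s^{k}\,\cbinom{x}{\frac{x}{2}+s}\,ds.
\]
Because the closed form \eqref{cb1} shows that $\cbinom{x}{\frac{x}{2}+s}$ depends on $s$ only through the even quantity $(\frac{x}{2}+s)(\frac{x}{2}-s)=\frac{x^{2}}{4}-s^{2}$, the density is symmetric in $s$; hence the integrand is odd whenever $k$ is odd, and all odd moments vanish, disposing of that branch immediately.

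For even $k$ I would substitute the explicit form $\cbinom{x}{\frac{x}{2}+s}=2I_{0}\big(2\sqrt{\frac{x^{2}}{4}-s^{2}}\big)+\frac{x}{\sqrt{\frac{x^{2}}{4}-s^{2}}}I_{1}\big(2\sqrt{\frac{x^{2}}{4}-s^{2}}\big)$ and expand each Bessel function in its defining power series. Writing $a=\frac{x}{2}$, the $I_{0}$ term contributes $\sum_{m}(m!)^{-2}(a^{2}-s^{2})^{m}$, while after one half-power cancels, the $I_{1}$ term contributes $\sum_{m}[m!(m+1)!]^{-1}(a^{2}-s^{2})^{m}$. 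Interchanging sum and integral, which is justified by absolute convergence on the compact interval, reduces everything to the elementary Beta integrals
\[
\int_{-a}^{a}s^{k}(a^{2}-s^{2})^{m}\,ds=a^{\,k+2m+1}\,\frac{\Gamma\!\big(\frac{k+1}{2}\big)\,m!}{\Gamma\!\big(m+\frac{k+3}{2}\big)},
\]
obtained via $s=a\sin\phi$ and the Beta function.

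It then remains to recognize the two resulting series as modified Bessel functions evaluated at $x=2a$. Through $I_{\nu}(2a)=\sum_{m}a^{2m+\nu}/[m!\,\Gamma(m+\nu+1)]$ with $\nu=\frac{k+1}{2}$, the $I_{0}$ piece collapses to $\Gamma(\frac{k+1}{2})a^{(k+1)/2}I_{(k+1)/2}(x)$, which with its prefactor $2$ supplies the $I_{(k+1)/2}(x)$ term of the claimed formula. For the $I_{1}$ piece the denominator carries $(m+1)!$ rather than $m!$, so I would shift the index by $j=m+1$; this rewrites the sum as $a^{-2}\sum_{j\ge1}a^{2j}/[j!\,\Gamma(j+\frac{k+1}{2})]$, which is $I_{(k-1)/2}(2a)$ with its $j=0$ term removed. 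That removed term, equal to $1/\Gamma(\frac{k+1}{2})$, is exactly what produces the subtracted $(x/2)^{k}$ in the statement, while the factor $x=2a$ out front rebalances the powers of $a$. Assembling the two contributions and dividing by $2(e^{x}-1)$ yields the stated closed form. The one genuinely delicate step is this index shift together with the Gamma-function bookkeeping, since it is where the lone $(x/2)^{k}$ correction materializes and where an off-by-one slip is easiest to make; everything else is routine once the Beta integral is in hand. As an independent check, one may instead read the even moments off the moment generating function of Theorem \ref{MomentGen}: at $p=\frac{1}{2}$ the centered variable has MGF $\varphi_{x,1/2}(u)/(e^{x}-1)$ with $\varphi_{x,1/2}(u)=\cosh(\frac{x}{2}\sqrt{4+u^{2}})-1+\frac{2}{\sqrt{4+u^{2}}}\sinh(\frac{x}{2}\sqrt{4+u^{2}})$, an even function of $u$, whose Taylor coefficients reproduce the same values.
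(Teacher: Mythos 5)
Your proposal is correct and follows essentially the same route as the paper: identify $A_{x,1/2}=2(e^x-1)(1/2)^x$, kill the odd moments by symmetry, and split the even-moment integral into the $I_0$ and $I_1$ pieces of the closed form \eqref{cb1}. The only difference is that the paper simply quotes the two resulting integral evaluations, whereas you derive them from scratch via the Bessel power series and the Beta integral (your index-shift bookkeeping, producing the lone $-\left(\frac{x}{2}\right)^{k}$ correction, checks out exactly), and your MGF cross-check via Theorem \ref{MomentGen} is a sound independent verification.
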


\begin{proof}
The density in the case $p=\frac{1}{2}$ is
\[
f_{x}\left(s\right)=\frac{1}{2\left(e^{x}-1\right)}\left\{ \begin{array}{c}
x\\
\frac{x}{2}+s
\end{array}\right\} ,
\]
so that
\begin{align*}
\mathbb{E}X^{k} & =\int_{-\frac{x}{2}}^{+\frac{x}{2}}s^{k}f_{x}\left(s\right)ds=\frac{1}{2\left(e^{x}-1\right)}\int_{-\frac{x}{2}}^{+\frac{x}{2}}s^{k}\left[2I_{0}\left(2\sqrt{\frac{x^{2}}{4}-s^{2}}\right)\right]ds\\
 & +\frac{1}{2\left(e^{x}-1\right)}\int_{-\frac{x}{2}}^{+\frac{x}{2}}s^{k}\left[\frac{x}{\sqrt{\frac{x^{2}}{4}-s^{2}}}I_{1}\left(2\sqrt{\frac{x^{2}}{4}-s^{2}}\right)\right]ds.
\end{align*}
Using
\[
2\int_{-\frac{x}{2}}^{+\frac{x}{2}}s^{k}\left[2I_{0}\left(2\sqrt{\frac{x^{2}}{4}-s^{2}}\right)\right]ds=\left(1+\left(-1\right)^{k}\right)\left(\frac{x}{2}\right)^{\frac{k+1}{2}}\Gamma\left(\frac{k+1}{2}\right)I_{\frac{k+1}{2}}\left(x\right)
\]
and
\[
x\int_{-\frac{x}{2}}^{+\frac{x}{2}}s^{k}\left[\frac{1}{\sqrt{\frac{x^{2}}{4}-s^{2}}}I_{1}\left(2\sqrt{\frac{x^{2}}{4}-s^{2}}\right)\right]ds=\left(1+\left(-1\right)^{k}\right)\left\{ \left(\frac{x}{2}\right)^{\frac{k+1}{2}}\Gamma\left(\frac{k+1}{2}\right)I_{\frac{k-1}{2}}-\left(\frac{x}{2}\right)^{k}\right\} ,
\]
we obtain the result.
\end{proof}
%%%%%%%%%%%%%%%%%%%%%%%%%%%%%%%%%%%%%%%%%%%%%%%%%%%%%%%%%%%%%%%%%%%%%%%%%
The continuous binomial distribution is illustrated for $x=10$
and for the 3 values $p=\frac{1}{2}$ (symmetric curve), $p=\frac{1}{3}$
and $p=\frac{1}{4}.$
\begin{center}
\begin{figure}

\begin{centering}
\includegraphics[angle=0,origin=c,scale=0.25]{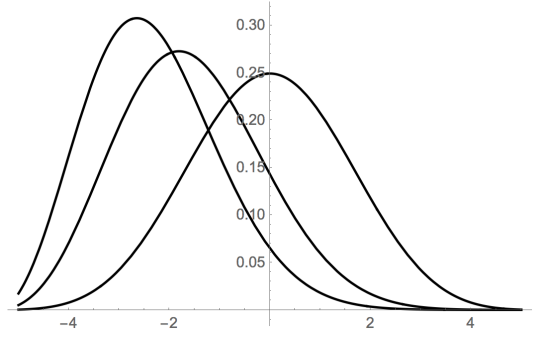}\caption{The continuous binomial distribution for $p=\frac{1}{2},\thinspace\thinspace\frac{1}{3}$,
and $\frac{1}{4}$, right to left.}
\par\end{centering}
\end{figure}
\par\end{center}

\section{A Stochastic Representation}
\label{section5}

The continuous binomial coefficient can be related to a stochastic process, the Goldstein-Kac telegraph process. This was studied by E. Orsingher in \cite{Orsingher} and a complete introduction to this process is given in \cite{Kolesnik}. The Goldstein-Kac process describes successive changes of a binary state,  the number of these changes  following a Poisson distribution: this implies that the successive times spent in each state - in our case, the lengths traveled in each successive direction - are independently and uniformly distributed. This corresponds to the least informative (maximum entropy) among all bounded support distributions.

Consider a Poisson process $n\left(t\right)$ with parameter $\lambda>0,$
and a particle that travels on the real axis, starting from $0$ with
an initial velocity equal to $+c$ or $-c$ each with probability
$1/2.$ The velocity of the particle is supposed to be
\[
v\left(t\right)=\pm c\left(-1\right)^{n\left(t\right)}
\]
so that the particle changes instantaneously the sign of its constant
velocity $c$ at each Poisson event. One trajectory of the velocity
in the case $\lambda=1.3$ and $t\in\left[0,15\right]$ is given below.
\begin{center}
\begin{figure}

\begin{centering}
\includegraphics[scale=0.25]{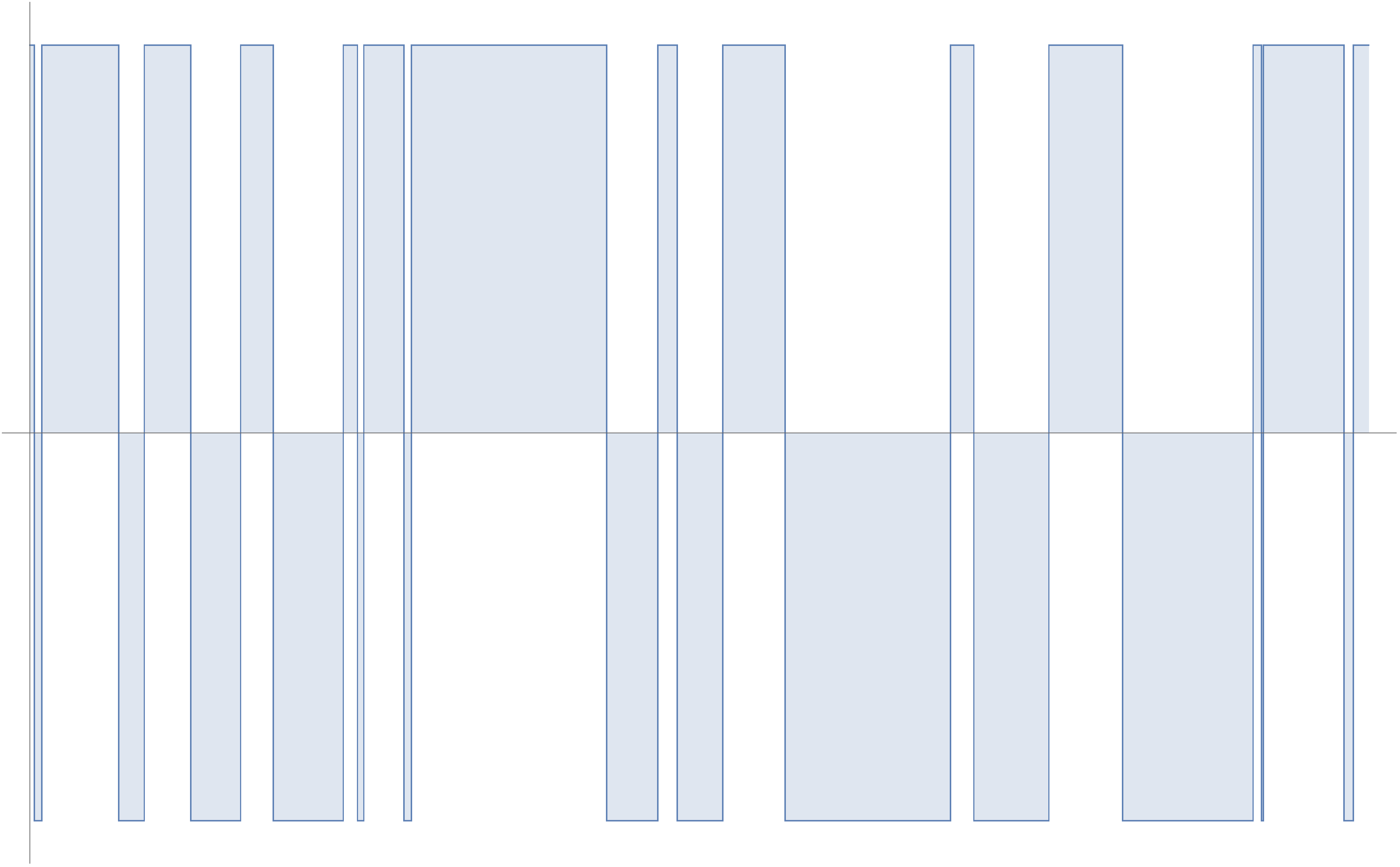}\caption{A trajectory of a telegraph process}
\par\end{centering}
\end{figure}
\par\end{center}

\begin{flushleft}
The location $X\left(t\right)$ of the particle at time $t$, given
by
\begin{equation}
X\left(t\right)=\int_{0}^{t}v\left(\tau\right)d\tau=\pm c\int_{0}^{t}\left(-1\right)^{n\left(\tau\right)}d\tau,\label{eq:X(t)}
\end{equation}
defines the Goldstein-Kacprocess. The probability function of the
location of the particle at time $t$ has two parts:
\par\end{flushleft}

- The discrete part is
\[
\Pr\left\{ X\left(t\right)=ct\vert n\left(t\right)=0\right\} =\Pr\left\{ X\left(t\right)=-ct\vert n\left(t\right)=0\right\} =\frac{1}{2}e^{-\lambda t},
\]
which is the conditional probability that the particle has reached
position $\pm ct$ at time $t$ without any Poisson event happening
since it started at time $0$

- Conditionally to the event $n\left(t\right)>0,$ the probability
function $p\left(s,t\right)ds=\Pr\left\{ s\le X\left(t\right)<s+ds\right\} $
is continuous and its density is given by
\begin{equation}
p\left(s,t\right)=\frac{e^{-\lambda t}}{2c}\left[\lambda I_{0}\left(\frac{\lambda}{c}\sqrt{c^{2}t^{2}-s^{2}}\right)+\frac{\lambda}{c}\frac{tc^{2}}{\sqrt{c^{2}t^{2}-s^{2}}}I_{1}\left(\frac{\lambda}{c}\sqrt{c^{2}t^{2}-s^{2}}\right)\right].\label{eq:p(s,t)}
\end{equation}
Now take
\[
ct=\frac{x}{2}\thinspace\thinspace\text{and}\thinspace\thinspace\lambda=2c=\frac{x}{t}
\]
so that
\begin{align*}
p\left(s,t\right) & =\frac{e^{-x}}{2}\left[2I_{0}\left(2\sqrt{\frac{x^{2}}{4}-s^{2}}\right)+\frac{x}{\sqrt{\frac{x^{2}}{4}-s^{2}}}I_{1}\left(2\sqrt{\frac{x^{2}}{4}-s^{2}}\right)\right]\\
 & =\frac{e^{-x}}{2}\cbinom{x}{\frac{x}{2}+s}.
\end{align*}
In the discrete setup of a centered binomial distribution with $p=1-p=\frac{1}{2},$
the usual binomial coefficient $\binom{n}{k}$ is proportional to
the numbers of ways that the particle, starting from $0,$ can reach
the site $k$ after $n$ independent equiprobable jumps to the left
or to the right. Assuming $n$ even, we have $-\frac{n}{2}\le k\le\frac{n}{2}.$

Similarly, the continuous binomial coefficient measures the ``number''
of continuous paths of an integrated telegraphic random process that,
starting from $\left(0,0\right),$ reach the point $\left(\frac{x}{2}+s,\frac{x}{2}-s\right)$,
traveling horizontally during a total time $\frac{1}{c}\left(\frac{x}{2}+s\right)$
and vertically during a remaining total time $\frac{1}{c}\left(\frac{x}{2}-s\right),$
and %changing
switching between East and North-East directions each time a Poisson event happens. The attached
figure shows two trajectories of such a process.
\begin{center}
\begin{figure}
\label{trajectories}
\begin{centering}
\includegraphics[scale=0.25]{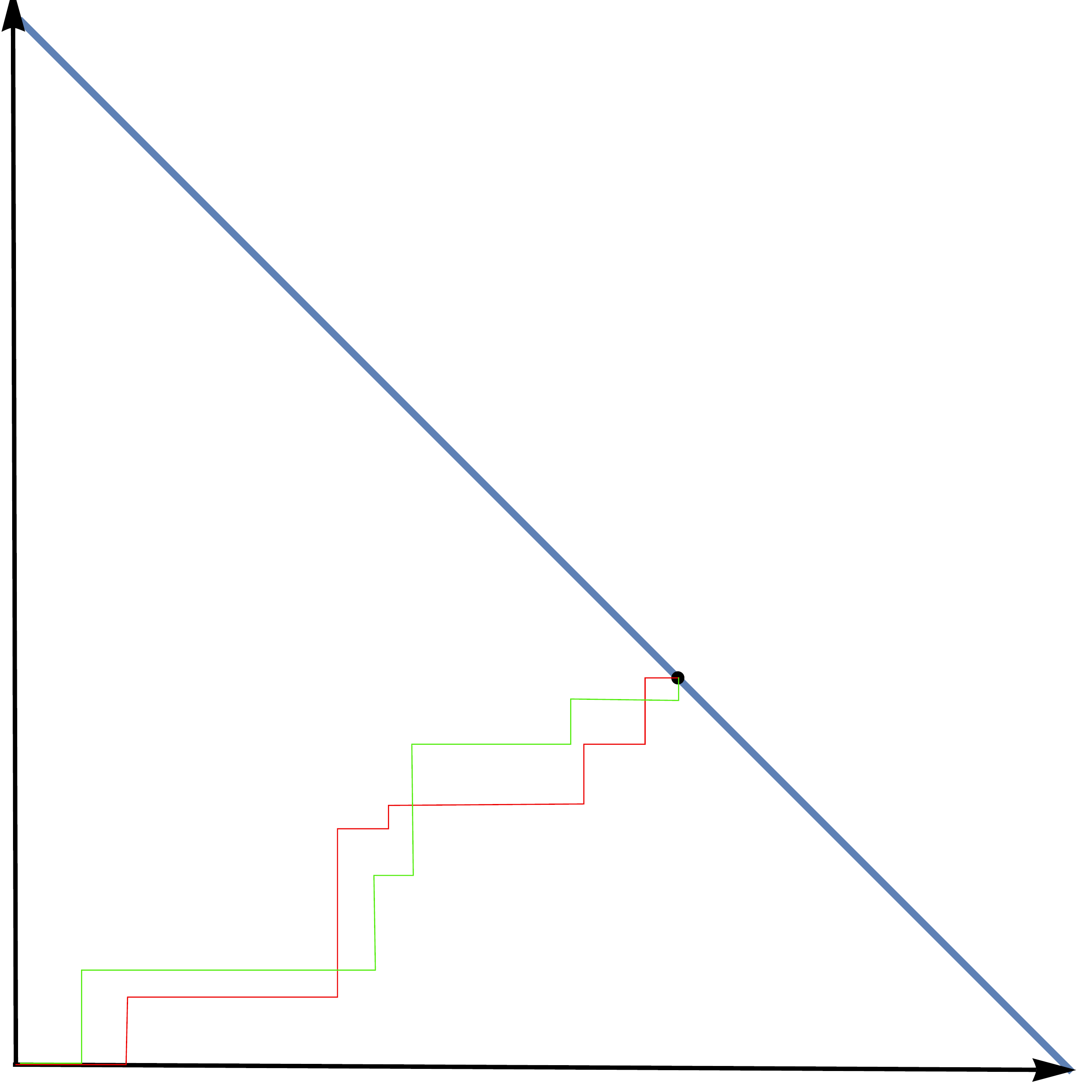}\caption{Two trajectories induced by the integrated telegraph process}
\par\end{centering}
\end{figure}
\par\end{center}

Note that the density (\ref{eq:p(s,t)}) satisfies the differential
equation
\begin{equation}
c^{2}\frac{\partial^{2}p}{\partial x^{2}}=\frac{\partial^{2}p}{\partial t^{2}}+2\lambda\frac{\partial p}{\partial t}\label{eq:diffeq},
\end{equation}
which can be transformed into
\[
c^{2}\frac{\partial^{2}v}{\partial x^{2}}+\lambda v^{2}=\frac{\partial^{2}v}{\partial t^{2}}
\]
with $v\left(x,t\right)e^{-\lambda t}=p\left(x,t\right).$ 

%\subsection{Asymptotics}
%
%It was remarked by Kac that, when $\lambda\to\infty$ with $\frac{c^{2}}{\lambda}=\sigma^{2},$
%the differential equation (\ref{eq:diffeq}) converges to the heat
%equation. In \cite{Orsingher}, E. Orsingher proves the following
%result.
%\begin{thm}
%\cite[Lemma 2]{Orsingher} In the limit
%\[
%\lambda\to\infty,\thinspace\thinspace\frac{c^{2}}{\lambda}=\sigma^{2},
%\]
%the random process $X\left(t\right)$ converges to a Brownian motion:
%\[
%\lim_{\lambda\to\infty,\thinspace\thinspace\frac{c^{2}}{\lambda}=\sigma^{2}}p\left(s,t\right)=\frac{1}{\sqrt{2\pi\sigma^{2}t}}\exp\left(-\frac{x^{2}}{2\sigma^{2}t}\right).
%\]
%\end{thm}
%Unfortunately, the continuous binomial coefficients defined by Cano and D\'{i}az correspond to the special case $ct=\frac{x}{2},\thinspace\thinspace\lambda=2c$
%so that they are not parameterized enough to show this asymptotic
%regime.
%

\section{Next Steps}
In this work, we studied coutinuous analogs of the binomial coefficient and Catalan numbers, and showed that they possess several properties of independent interest. Compact expression for both in terms of Bessel $I$ functions should allow us to prove several straightforward results about them in the future. Because of a reduction procedure to the discrete case, described in \cite{Wakhare}, this can potentially inform research about the discrete case.

\end{document}